\numberwithin{equation}{section}
\theoremstyle{plain}
\newtheorem{theorem}{Theorem}[section]
\newtheorem{lemma}[theorem]{Lemma}
\newtheorem{proposition}[theorem]{Proposition}
\newtheorem{corollary}[theorem]{Corollary}
\theoremstyle{definition}
\newtheorem{definition}[theorem]{Definition}
\newtheorem{remark}[theorem]{Remark}
\newtheorem{notation}[theorem]{Notation}
\newtheorem{question}[theorem]{Question}
\newcommand{\gruppo}{{\mathbb{Z}}_3}
\newcommand{\first}{e^{2 \pi i/3}}
\newcommand{\third}{e^{-2 \pi i/3}}
\newcommand{\matrices}{M_{3}}
\newcommand{\Ad}{{\operatorname{Ad}}}
\newcommand{\id}{{\operatorname{id}}}
\newcommand{\omin}{\otimes_{\operatorname{min}}}
\newcommand{\af}{\alpha}
\newcommand{\bt}{\beta}
\newcommand{\gm}{\gamma}
\newcommand{\ep}{\varepsilon}
\newcommand{\et}{\eta}
\newcommand{\ld}{\lambda}
\newcommand{\sm}{\sigma}
\newcommand{\ph}{\varphi}
\newcommand{\om}{\omega}
\newcommand{\ta}{\tau}
\newcommand{\Z}{{\mathbb{Z}}}
\newcommand{\R}{{\mathbb{R}}}
\newcommand{\C}{{\mathbb{C}}}
\newcommand{\N}{{\mathbb{N}}}
\newcommand{\ca}{C*-algebra}
\newcommand{\tst}{tracial state}
\newcommand{\pj}{projection}
\newcommand{\hm}{homomorphism}
\newcommand{\andeqn}{\,\,\,\,\,\, {\mbox{and}} \,\,\,\,\,\,}
\newcommand{\dirlim}{\varinjlim}
\newcommand{\Aut}{{\operatorname{Aut}}}
\begin{document}
\title[C*-Algebra not Anti-isomorphic to Itself]{A
 Simple Separable Exact C*-Algebra
 not Anti-isomorphic to Itself}
\author{N.~Christopher Phillips and Maria Grazia Viola}
\address{\hskip-\parindent Department of Mathematics\\
 University of Oregon\\
 Eugene OR 97403-1222 \\
 USA}
\email{ncp@darkwing.uoregon.edu}
\address{\hskip-\parindent
 Department of Mathematics and Interdisciplinary Studies \\
 Lakehead University-Orillia \\
 1~Colborne Street~W \\
 Orillia ON L3V 7X5 \\
 Canada}
\email{mviola@lakeheadu.ca}
\subjclass[2000]{46L35, 46L37, 46L40, 46L54}
\date{17~December 2011}

\thanks{N.~Christopher Phillips was partially supported
by NSF grant DMS-0701076,
by the Fields Institute for Research in Mathematical Sciences,
Toronto, Canada,
and by an Elliott Distinguished Visitorship at the Fields Institute.
Maria Grazia Viola was partly supported by the Fields Institute for
Research in Mathematical Sciences,
and by Queen's University, Kingston, Canada}

\begin{abstract}
We give an example of an exact, stably finite, simple,
separable \ca\  $D$ which is not isomorphic to its opposite algebra.
Moreover, $D$ has the following additional properties.
It is stably finite,
approximately divisible,
has real rank zero and stable rank one,
has a unique tracial state,
and the order on \pj s over~$D$ is determined by traces.
It also absorbs the Jiang-Su algebra~$Z,$
and in fact absorbs the $3^{\infty}$~UHF algebra.
We can also explicitly compute the K-theory of~$D,$
namely $K_0 (D) \cong {\mathbb{Z}} \big[ \tfrac{1}{3} \big]$
with the standard order,
and $K_1 (D) = 0,$ as well as the Cuntz semigroup of $D,$
namely
$W (D)
 \cong {\mathbb{Z}} \big[ \tfrac{1}{3} \big]_{+} \sqcup (0, \infty).$

\end{abstract}

\maketitle

\section{Introduction}\label{Sec:1}

A \ca\  $A$ is said to be anti-isomorphic to itself if
is isomorphic to its opposite algebra $A^{\mathrm{op}}.$
The opposite algebra is by definition the \ca\  %
whose underlying vector space structure, norm, and adjoint
are the same as for $A,$
while the product of $x$ and $y$ is equal to $y x$ instead of $x y.$

In this paper we give an example of a simple separable exact \ca\  %
which is not anti-isomorphic to itself.
There are several examples in the literature
of factors of type~${\mathrm{II}}_1$
and type~${\mathrm{III}}$
 which are not isomorphic to their opposite algebras.
(See for example \cite{Connes1}, \cite{Viola}, and~\cite{Connes2}.)
A \ca\  isomorphism of von Neumann algebras is necessarily a
von Neumann algebra isomorphism, by Corollary~5.13 of~\cite{SZ}.
These are therefore also examples of simple \ca s
not isomorphic to their opposite algebras.
However, none of these examples is separable or exact
in the \ca\  sense.
(An infinite factor contains a copy of every separable \ca,
so is not exact as a \ca.
A factor of type~${\mathrm{II}}_1$ contains a copy
of the \ca~$M$
of all bounded sequences in $\prod_{k = 1}^{\infty} M_k,$
and the proof of the proposition in Section~2.5 of~\cite{Ws}
shows that $M$ is not exact.)

In a recent work~\cite{Phil}, the first author proved
the existence of a simple separable \ca~$A$
which is not isomorphic to its opposite algebra.
Moreover, $A$ was shown to have real rank zero and $K_1 (A) = 0.$
The \ca~$A$ was constructed as a subalgebra
of Connes' example of a ${\mathrm{II}}_1$~factor $N$
not anti-isomorphic to itself (see~\cite{Connes1}),
by finding a series of intermediate \ca s $(B_n)_{n \geq 0}$
in $N$ with some nice properties
relative to the trace and the real rank,
and taking $A = {\overline{\bigcup_{n = 0}^{\infty} B_n}}.$
However, the construction did not give
an explicit model for this example.
Problem~4.5 of~\cite{Phil} asked for a more natural
and explicit example of a simple separable \ca\  %
not isomorphic to its opposite algebra,
and Problem~4.7 of~\cite{Phil} asked for an exact example.
We provide an example satisfying both conditions here.

We construct our example using a C*~analog of a variation,
due to the second author~\cite{Viola},
of Connes' example of a ${\mathrm{II}}_1$~factor
not anti-isomorphic to itself.
We let $\Z_n$ denote $\Z / n \Z.$
Our \ca\  is a crossed product of a \ca~$C$
by an action $\gamma$ of $\gruppo.$
We show that $D = C \rtimes_{\gamma} \gruppo$
is exact, stably finite, simple, separable, and unital.
Moreover, using our explicit construction,
we show that $D$
is approximately divisible and has stable rank one and real rank zero,
and that it tensorially absorbs the Jiang-Su algebra~$Z$
and the $3^{\infty}$~UHF algebra.
Also, the order on projections over $D$ is
determined by traces,
and the K-groups and Cuntz semigroup of $D$ are given by
\[
K_0 (D)
 \cong {\mathbb{Z}} \big[ \tfrac{1}{3} \big],
\,\,\,\,\,\,
K_1 (D) = 0,
\andeqn
W (D)
 \cong {\mathbb{Z}} \big[ \tfrac{1}{3} \big]_{+} \sqcup (0, \infty).
\]

In Section~\ref{Sec:2} we recall definitions related to
the properties we prove for~$D.$
Section~\ref{Sec:3} contains a result
on tracial states on crossed products.
The \ca~$C$ and the action $\gamma$
are described in Section~\ref{Sec:4}.
The algebra~$C$ is the tensor product of a reduced free product~$A$
and the $3^{\infty}$~UHF algebra~$B.$
The action $\gamma$ is a perturbation by an inner automorphism
of the tensor product of two automorphisms $\alpha \in \Aut (A)$
and $\beta \in \Aut (B).$
The proof that $D \not\cong D^{\mathrm{op}},$
which is done in Section~\ref{Sec:5},
relies on uniqueness of the tracial state on~$D,$
and the fact that its Gelfand-Naimark-Segal representation
yields a ${\mathrm{II}}_1$~factor
not isomorphic to its opposite algebra.
We show in Section~\ref{Sec:6} that $\gamma$ has the Rokhlin property,
which is the key to obtaining the additional
properties of $D$ that are given above.
These are proved in Sections \ref{Sec:6} and~\ref{Sec:7}.
We conclude with some open questions in Section~\ref{Sec:8}.

\section{Definitions}\label{Sec:2}

In this section,
we give some definitions
and known results which play a key role in this paper.

\begin{definition}
A \ca\  $A$ is {\emph{exact}} if for every short exact sequence of \ca s
and homomorphisms
\[
0 \longrightarrow J
 \longrightarrow B
 \longrightarrow B/J
 \longrightarrow 0,
\]
the sequence of spatial tensor products
\[
0 \longrightarrow A \omin J
 \longrightarrow A \omin B
 \longrightarrow A \omin (B/J)
 \longrightarrow 0
\]
is exact.
\end{definition}

The theory of exact \ca s was developed by Kirchberg
in a series of papers,
particularly \cite{Kr0.5}, \cite{Kr1}, and~\cite{Kr2}.
He proved a number of conditions equivalent to exactness
for separable, unital \ca s.
For example, exactness
is equivalent to nuclear embeddability.
(See Theorem~4.1 and the preceding comment in~\cite{Kr1}.)
He also proved that the class of exact \ca\  is closed
under some of the standard operations on \ca s,
like quotients, tensor products, direct limits,
and crossed products by amenable locally compact groups
(Proposition~7.1 of~\cite{Kr2}).
Nuclear \ca s are exact, so abelian \ca s, AF~algebras,
and group \ca s of amenable groups are all exact.

\begin{definition}\label{D:MvN}
Let $A$ be a \ca,  and let $p, q \in A$ be projections.
Then $p \precsim q$ if and only if there is $v \in A$ such that
$p = v v^*$ and $v^* v \leq q.$
\end{definition}

In a finite factor,
the order on projections is determined by the trace.
This means that if $M$ is a finite factor
with standard trace $\ta,$
and $e, f \in M$ are \pj s,
then $e \precsim f$ if and only if $\tau (e) \leq \tau (f).$
In~\cite{Bla} Blackadar asked to what extent a
similar comparison theory could be developed for simple unital \ca s.
A simple unital \ca\  $A$ may have more than one \tst,
so we use the set $T (A)$ of all tracial states on~$A.$
The following definition
is Blackadar's Second Fundamental Comparability Question
for $\bigcup_{n = 1}^{\infty} M_n (A).$
See 1.3.1 in~\cite{Bla}.

\begin{definition}\label{D:OrdDetTr}
Let $A$ be a unital \ca.
Set $M_{\infty} (A) = \bigcup_{n = 1}^{\infty} M_n (A),$
using the usual embedding of $M_n (A)$ in $M_{n + 1} (A)$
as the upper left corner.
We say that
{\emph{the order on projections over $A$ is determined by traces}}
if whenever $p, q \in M_{\infty} (A)$ are projections
with $q \neq 0$ such that
$\tau (p) < \tau (q)$ for every $\tau$ in $T (A),$
then $p \precsim q.$
\end{definition}

Unfortunately,
one must sometimes use quasitraces instead of tracial states.
A quasitrace on a \ca~$A$ is essentially a not necessarily linear trace,
but which extends to all matrix algebras over~$A.$
See 4.2 of~\cite{Rordam2}.
(This definition differs slightly from that in
Definition II.1.1 of~\cite{BlaHan},
which omits normalization and the requirement that
the quasitrace extend properly to matrix algebras.)
It is an open question whether every quasitrace is a trace.

\begin{definition}[Definition~1.2 of~\cite{BlaRor}]\label{D:AppD}
Let $A$ be a separable unital \ca.
We say that $A$ is {\emph{approximately divisible}} if for every
$x_1, x_2, \ldots, x_n \in A$ and $\varepsilon > 0,$
there exists a finite-dimensional C*-subalgebra $B$ of $A,$
containing the unit of $A,$ such that:
\begin{enumerate}
\item\label{D:AppD:1}
$B$ has no commutative central summand, that is,
$B$ has no abelian central projection.
\item\label{D:AppD:2}
$\| x_k y - y x_k \| < \varepsilon$ for $k = 1, 2, \ldots, n$
and all $y$ in the unit ball of~$B.$
\end{enumerate}
\end{definition}

Many standard simple \ca s are approximately divisible.
For our purposes, we need:

\begin{lemma}\label{L:AppDivAlgs}
\begin{enumerate}
\item\label{L:AppDivAlgs:1}
Every infinite-dimensional simple unital AF~algebra
is approximately divisible.
\item\label{L:AppDivAlgs:2}
Let $A$ and $B$ be separable unital \ca s.
If $A$ is approximately divisible,
then so is the tensor product $A \otimes B,$
with any choice of tensor norm.
\end{enumerate}
\end{lemma}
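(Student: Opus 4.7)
For part~(\ref{L:AppDivAlgs:1}), I would write $A = \dirlim_M A_M$ with each $A_M$ a finite-dimensional unital C*-subalgebra of~$A$ containing $1_A$, and given $x_1, \ldots, x_n \in A$ and $\varepsilon > 0$, approximate each $x_j$ within $\varepsilon/2$ by an element of some~$A_N$. Any unital finite-dimensional subalgebra of the relative commutant $A_N' \cap A$ then commutes exactly with the approximations, so the task reduces to producing such a subalgebra with no abelian central summand. Cornering by the minimal central projections $p_1, \ldots, p_r$ of $A_N$ further reduces the question to the following: for each $\ell$, the algebra $D_\ell := (p_\ell A_N p_\ell)' \cap p_\ell A p_\ell$ is a simple unital infinite-dimensional AF-algebra (since $p_\ell A p_\ell$ is Morita equivalent to~$A$, and the relative commutant of a unital matrix subalgebra preserves this structure via the matrix-unit isomorphism), and I need a unital finite-dimensional subalgebra of $D_\ell$ with no abelian central summand.

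For this last step I would appeal to the dimension group: since $D_\ell$ is simple, unital, and infinite-dimensional AF, its dimension group is a simple non-cyclic dimension group, and the trace values on projections are dense in~$[0, 1]$. A B\'ezout-style argument in the dimension group then produces integers $m_1, m_2 \geq 2$ and positive dimension-group elements $y_1, y_2$ satisfying $m_1 y_1 + m_2 y_2 = 1$; lifting these to projections and splitting each into the required number of orthogonal equivalent pieces yields a unital copy of $M_{m_1} \oplus M_{m_2}$ inside $D_\ell$. Direct-summing over $\ell$ delivers the desired subalgebra of $A_N' \cap A$.

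For part~(\ref{L:AppDivAlgs:2}), the plan is a tensor-product pushforward. Given $z_1, \ldots, z_n \in A \otimes B$ and $\varepsilon > 0$, I would approximate each $z_i$ by a finite sum of elementary tensors $\sum_j a_{ij} \otimes b_{ij}$, and apply approximate divisibility of~$A$ to the finite set $\{a_{ij}\}$ with a tolerance chosen small in terms of $\varepsilon$ and the $\|b_{ij}\|$. The resulting unital finite-dimensional $F \subseteq A$ with no abelian central summand gives $F \otimes \C 1_B \subseteq A \otimes B$ as the witness: it is finite-dimensional, unital, inherits from $F$ the absence of abelian central summands, and the identity $(y \otimes 1)(a \otimes b) - (a \otimes b)(y \otimes 1) = (ya - ay) \otimes b$, together with $\|y \otimes 1\| = \|y\|$ in any C*-cross-norm, yields the required approximate-commutation estimate.

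The main obstacle lies entirely in part~(\ref{L:AppDivAlgs:1}): one must ensure the matrix-algebra subalgebra one constructs is \emph{unital} in~$D_\ell$, not merely supported on a subprojection. This is exactly where density of the dimension range and the B\'ezout step are needed; for contrast, the UHF~algebra $M_{3^\infty}$ contains no unital copy of $M_2$ at all, so one cannot hope to produce a matrix-algebra summand that is simultaneously unital and of a predetermined size.
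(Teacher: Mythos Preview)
Your proposal is correct. The paper's own proof simply cites Proposition~4.1 of~\cite{BlaRor} for part~(\ref{L:AppDivAlgs:1}) and declares part~(\ref{L:AppDivAlgs:2}) ``immediate''; your argument for~(\ref{L:AppDivAlgs:1}) is essentially a reconstruction of the Blackadar--Kumjian--R{\o}rdam proof (pass to the relative commutant of a finite stage, then use the simple dimension group to manufacture a unital $M_{m_1}\oplus M_{m_2}$), and your argument for~(\ref{L:AppDivAlgs:2}) is exactly the tensor-pushforward the paper has in mind.
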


\begin{proof}
Part~(\ref{L:AppDivAlgs:1}) is contained in
Proposition~4.1 of~\cite{BlaRor}.
Part~(\ref{L:AppDivAlgs:2}) is immediate.
\end{proof}

\begin{proposition}\label{order}
Let $A$ be a
finite approximately divisible simple separable unital exact \ca.
Then the order on projections over $A$ is determined by traces.
\end{proposition}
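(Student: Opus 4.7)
The plan is to combine two deep theorems from the literature. Given projections $p, q \in M_{\infty}(A)$ with $q \neq 0$ satisfying $\tau(p) < \tau(q)$ for every $\tau \in T(A),$ I would first note that by weak-$*$ compactness of $T(A)$ one can find $\varepsilon > 0$ with $\tau(p) + \varepsilon < \tau(q)$ for all $\tau,$ so the hypothesis is really a uniform strict inequality.

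My first step is to upgrade this inequality from tracial states to quasitraces. Since $A$ is exact, Haagerup's theorem applies and every normalized quasitrace on $A$ is in fact a tracial state, so $T(A) = QT(A).$ In particular the strict inequality $\tau(p) < \tau(q)$ persists for every $\tau \in QT(A).$ This is the only place where exactness is used.

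The second step is to invoke the comparison theorem for approximately divisible C*-algebras, essentially due to Blackadar--R\o rdam: for a finite approximately divisible simple separable unital \ca, strict inequality on all quasitraces implies Murray--von Neumann subequivalence, that is, $p \precsim q.$ Combined with the previous step, this yields the conclusion.

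The main (and essentially only) obstacle lies in the second step. The underlying argument uses approximate divisibility to produce a finite-dimensional matrix subalgebra $B \subset A$ with no abelian central summand in the approximate relative commutant of $\{p, q\};$ rational approximation of quasitrace values then lets one realize the required comparison inside a suitable corner controlled by~$B,$ and the finiteness hypothesis is needed to pass from an approximate Murray--von Neumann comparison to a genuine one (so that one does not accidentally produce an isometry in an infinite corner). Exactness enters only through Haagerup's theorem, while approximate divisibility and finiteness together furnish the actual comparison machinery.
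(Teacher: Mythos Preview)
Your proposal is correct and follows essentially the same approach as the paper: invoke Haagerup's theorem to identify $T(A)$ with $QT(A)$, then apply the comparison result for approximately divisible simple separable unital \ca s (Corollary~3.9(b) of Blackadar--Kumjian--R{\o}rdam). The uniform $\varepsilon$ via weak-$*$ compactness and the sketch of the underlying mechanism are extra commentary not present in the paper's proof, but the core argument is the same.
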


\begin{proof}
Corollary 3.9(b) of~\cite{BlaRor} implies that
if $p, q \in M_{\infty} (A)$ are \pj s
with $q \neq 0$ and $\tau (p) < \tau (q)$ for
every quasitrace $\ta$ on~$A,$
then $p \precsim q.$
It is proved in~\cite{Haa} that any quasitrace
on a unital exact \ca\  is a tracial state.
It is now clear that
if $\tau (p) < \tau (q)$ for
every quasitrace $\ta$ on~$A,$
then $q \neq 0.$
This verifies Definition~\ref{D:OrdDetTr}.
\end{proof}

\begin{remark}\label{R:HQT}
Since~\cite{Haa} remains unpublished,
we explain how one can prove
Proposition~\ref{order} using only published results.
First, every quasitrace $\ta$ on~$A$
defines a state on $K_0 (A)$ by using the extension
of $\ta$ to $M_{\infty} (A)$ and the formula
$\ta_* ([p]) = \ta (p).$
The converse is also true
(Theorem~3.3 of~\cite{BR}).
So it is enough to show that every state on $K_0 (A)$
is actually induced by a tracial state on~$A.$
For a unital exact \ca~$A,$ this is Corollary~9.18 of~\cite{HT}.
\end{remark}

We follow Section~2 of~\cite{BPT}
(not the original source)
for the Cuntz semigroup.
See~\cite{BPT} and the references there
for proofs of the assertions made here.

\begin{definition}\label{D:CuEq}
Let $A$ be a \ca.
Given $a, b\in M_{\infty} (A)_{+},$
we say that $a$ is {\emph{Cuntz subequivalent}} to~$b,$
denoted by $a \precsim b,$
if there is a sequence $(v_n)_{n = 1}^{\infty}$
in $M_{\infty} (A)$ such that
\[
\lim_{n \to \infty} \|v_n b v_n^* - a \| = 0.
\]
Moreover, we say that $a$ and $b$ are {\emph{Cuntz equivalent}},
denoted by $a \sim b,$ if $a \precsim b$ and $b \precsim a.$
\end{definition}

This is an equivalence relation,
so we can make the following definition.

\begin{definition}\label{D:CuSg}
Given a \ca~$A,$ we
define the Cuntz semigroup $W(A)$ of~$A$
to be $M_{\infty} (A)_{+} / {\sim}.$
We write $\langle a \rangle$
for the class of $a \in M_{\infty} (A)_{+}.$
We define a semigroup operation by
$\langle a \rangle + \langle b \rangle = \langle a \oplus b \rangle,$
and a partial order by
$\langle a \rangle \leq \langle b \rangle$
if and only if $a \precsim b.$
\end{definition}

This semigroup is an analog for positive elements
of the semigroup
of Murray-von Neumann equivalence classes of projections.
If the \ca~$A$ is stably finite,
then two projections $p, q \in M_{\infty} (A)$
are Murray-von Neumann equivalent if and only if
they are Cuntz equivalent.
In general, it is not easy to compute the Cuntz semigroup
and it is not known how the Cuntz semigroup behaves
with respect to tensor products or reduced free products.

\section{Uniqueness of the tracial state
 on some crossed products}\label{Sec:3}

Let $\alpha \colon G \to \Aut (A)$
be an action of a finite group~$G$ on a unital \ca~$A.$
Consider the crossed product
$A \rtimes_{\alpha} G = A \rtimes_{\alpha, {\mathrm{r}}} G,$
with standard unitaries $u_g$ coming from the elements of~$G.$
The usual conditional expectation
$E \colon A \rtimes_{\alpha} G \to A$
is given by $E \left (\sum_{g \in G} a_{g} u_{g} \right) = a_1,$
where $1$ is the identity of~$G.$
Any state $\om$ on $A$ gives rise to a state $\om \circ E$
on $A \rtimes_{\alpha} G.$

\begin{remark}\label{R:Inv}
Let the notation be as above, and further assume $G$ is abelian.
One easily checks that a state $\psi$ on $A \rtimes_{\alpha} G$
is of the form $\om \circ E$ for some state $\om$ on $A$
if and only if $\psi$ is invariant under the dual action
${\widehat{\alpha}} \colon {\widehat{G}}
        \to {\operatorname{Aut}} (A \rtimes_{\alpha} G).$
\end{remark}

\begin{lemma}\label{utrace}
Let $A$ be a unital \ca\  %
with a unique tracial state $\tau.$
Let $\alpha$ be an action of ${\mathbb{Z}}_{n}$ on $A.$
Let $\pi_{\tau}$ be the Gelfand-Naimark-Segal representation of $A$
corresponding to $\tau.$
Set
$N_{\tau}
 = (\pi_{\tau} (A))^{\prime\prime} \subseteq B (L^2 (A, \tau)).$
Denote by $\widetilde{\alpha}$ the extension of $\alpha$ to $N_{\tau}.$
Assume that the von Neumann algebra
$N_{\tau} \rtimes_{\widetilde{\alpha}} {\mathbb{Z}}_n$
is a ${\mathrm{II}}_1$ factor.
Then there is a unique tracial state
on $A \rtimes_{\alpha} {\mathbb{Z}}_{n}.$
\end{lemma}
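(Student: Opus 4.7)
The plan is to first dispatch existence, then prove uniqueness by reducing it to the uniqueness of the trace on the von Neumann algebra $N_{\tau} \rtimes_{\widetilde{\alpha}} \mathbb{Z}_n$. For existence, since $\tau$ is the only tracial state on $A$, each composition $\tau \circ \alpha_g$ is tracial and hence equals $\tau$. Thus $\tau$ is $\alpha$-invariant, and $\sigma_0 := \tau \circ E$ is a tracial state on the crossed product, where $E \colon A \rtimes_{\alpha} \mathbb{Z}_n \to A$ is the canonical conditional expectation. For uniqueness, let $\sigma$ be an arbitrary tracial state on $A \rtimes_{\alpha} \mathbb{Z}_n$. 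Its restriction to $A$ is tracial, so $\sigma|_A = \tau$. Let $(\pi_{\sigma}, H_{\sigma}, \xi_{\sigma})$ be the GNS representation of $\sigma$ and set $M_{\sigma} = \pi_{\sigma}(A \rtimes_{\alpha} \mathbb{Z}_n)''$, a finite von Neumann algebra with normal tracial vector state $\widehat{\sigma}(x) = \langle x \xi_{\sigma}, \xi_{\sigma} \rangle$. Let $P = \pi_{\sigma}(A)''$.

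The key construction is a normal surjective $*$-homomorphism $\phi \colon P \to N_{\tau}$ with $\phi(\pi_{\sigma}(a)) = \pi_{\tau}(a)$, obtained by compressing $P$ to the cyclic subspace $H_0 = \overline{\pi_{\sigma}(A) \xi_{\sigma}}$ and invoking uniqueness of the GNS construction for $\tau$ (which identifies $H_0$ with $L^2(A, \tau)$). Let $z \in P$ be the central projection with $\ker \phi = (1 - z) P$, so that $\phi|_{zP} \colon zP \xrightarrow{\sim} N_{\tau}$; equivalently, $z$ is the smallest central projection of $P$ with $\phi(z) = 1$. The normal traces $\widehat{\sigma}|_P$ and $\widetilde{\tau} \circ \phi$ agree on the weakly dense subalgebra $\pi_{\sigma}(A)$ and hence on $P$; in particular $\widehat{\sigma}(1 - z) = 0$. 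Writing $\overline{\alpha}_g := \operatorname{Ad}(\pi_{\sigma}(u_g))$ for the extension of $\alpha_g$ to $P$, normality of $\phi$ together with agreement on $\pi_{\sigma}(A)$ yields $\phi \circ \overline{\alpha}_g = \widetilde{\alpha}_g \circ \phi$. Consequently $\overline{\alpha}_g(z)$ is a central projection of $P$ with $\phi(\overline{\alpha}_g(z)) = 1$, and minimality of $z$ gives $z \leq \overline{\alpha}_g(z)$; applying $\overline{\alpha}_{g^{-1}}$ gives the reverse inequality, so $\overline{\alpha}_g(z) = z$. Therefore $z$ commutes with each $\pi_{\sigma}(u_g)$ and is central in $M_{\sigma}$. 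The corner $z M_{\sigma}$ is generated as a von Neumann algebra by $zP \cong N_{\tau}$ and the unitaries $z \pi_{\sigma}(u_g)$ implementing $\widetilde{\alpha}_g$, so the universal property of the von Neumann crossed product yields a surjective normal $*$-homomorphism $N_{\tau} \rtimes_{\widetilde{\alpha}} \mathbb{Z}_n \twoheadrightarrow z M_{\sigma}$. Since the source is a ${\mathrm{II}}_1$ factor and the target is nonzero, the kernel is trivial and the map is an isomorphism; the unique tracial state $\widetilde{\psi}$ on the factor is then identified with $\widehat{\sigma}|_{zM_{\sigma}}$.

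Finally, the Cauchy-Schwarz inequality for the trace $\widehat{\sigma}$, combined with $\widehat{\sigma}(1 - z) = 0$, yields $\widehat{\sigma}((1 - z) y) = 0$ for every $y \in M_{\sigma}$. Hence for any $x \in A \rtimes_{\alpha} \mathbb{Z}_n$, $\sigma(x) = \widehat{\sigma}(\pi_{\sigma}(x)) = \widehat{\sigma}(z \pi_{\sigma}(x)) = \widetilde{\psi}(\Phi(x))$, where $\Phi \colon A \rtimes_{\alpha} \mathbb{Z}_n \to N_{\tau} \rtimes_{\widetilde{\alpha}} \mathbb{Z}_n$ is the canonical $*$-homomorphism given by $a u_g \mapsto \pi_{\tau}(a) v_g$. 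This expression is independent of $\sigma$, completing the uniqueness argument. The principal obstacle is the identification $z M_{\sigma} \cong N_{\tau} \rtimes_{\widetilde{\alpha}} \mathbb{Z}_n$, and it is precisely there that the ${\mathrm{II}}_1$ factor hypothesis is decisive, through the fact that a factor has no non-trivial weakly closed two-sided ideals.
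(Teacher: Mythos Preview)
Your argument is correct, but it proceeds along a genuinely different line from the paper's. The paper argues by contradiction: if $\sigma \neq \tau\circ E$, average $\sigma$ over the dual action $\widehat{\alpha}$ to obtain $\sigma \leq n\,(\tau\circ E)$; this domination gives an $L^2$-continuity estimate which is used to extend $\sigma$ to a normal trace $\widetilde{\sigma}$ on $N_\tau \rtimes_{\widetilde{\alpha}} \mathbb{Z}_n$; then Radon--Nikodym for normal traces produces a nonscalar central element, contradicting factoriality. Your route instead passes through the GNS representation of~$\sigma$: you compress $\pi_\sigma(A)''$ onto the cyclic subspace for $A$ to realize a normal surjection onto $N_\tau$, show its central support $z$ is $\overline{\alpha}$-invariant, identify $zM_\sigma$ with $N_\tau \rtimes_{\widetilde{\alpha}} \mathbb{Z}_n$ via the universal property of the crossed product (which for a finite group coincides with the von Neumann crossed product and yields a normal map), and then invoke uniqueness of the tracial state on a ${\mathrm{II}}_1$ factor. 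The paper's proof is shorter and the dual-action averaging trick is efficient; yours is more structural, avoids the dual action and the $L^2$-extension step entirely, and yields the explicit formula $\sigma = \widetilde{\psi}\circ\Phi$. Two small points you are implicitly using and might make explicit: (i) for a finite group the image of the integrated form of a normal covariant pair is automatically weakly closed, which is what gives surjectivity onto $zM_\sigma$; (ii) a ${\mathrm{II}}_1$ factor has a unique tracial state even in the $C^*$-sense (any trace agrees with the normal trace on projections, hence everywhere), which is what you need since a priori $\widehat{\sigma}|_{zM_\sigma}$ is only a $C^*$-tracial state.
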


\begin{proof}
Let $E \colon A \rtimes_{\alpha} {\mathbb{Z}}_{n} \to A$
be the conditional expectation on the crossed product,
as defined above.
Then $\tau \circ E$ is a tracial state
on $A \rtimes_{\alpha} {\mathbb{Z}}_{n}.$
Suppose $\sm$ is a different tracial state
on $A \rtimes_{\alpha} {\mathbb{Z}}_{n}.$
Identifying ${\widehat{\mathbb{Z}}_n}$ with ${\mathbb{Z}}_{n},$
let
${\widehat{\alpha}} \colon {\mathbb{Z}}_{n} \to
 \Aut (A \rtimes_{\alpha} {\mathbb{Z}}_{n})$
denote the dual action.
Take
\[
\mu = \frac{1}{n} \sum_{k = 0}^{n - 1} \sm \circ {\widehat{\alpha}}_k.
\]
Then $\mu$ is an ${\widehat{\alpha}}$-invariant tracial state,
so by Remark~\ref{R:Inv} and uniqueness of~$\ta,$
we have $\mu = (\mu |_{A}) \circ E = \tau \circ E.$
Therefore.
\[
\sm \leq n \cdot (\tau \circ E).
\]

We claim that $\sm$ extends to a
normal tracial state ${\widetilde{\sm}}$
on
$N_{\tau} \rtimes_{\widetilde{\alpha}} {\mathbb{Z}}_{n}.$
Denote by $\tau_0$ the unique trace on
$N_{\tau} \rtimes_{\widetilde{\alpha}} {\mathbb{Z}}_{n},$
so that $\tau_0 |_{A \rtimes_{\alpha} {\mathbb{Z}}_{n}} = \tau \circ E.$
Recall that the $L^2$-norm on
$N_{\tau} \rtimes_{\widetilde{\alpha}} {\mathbb{Z}}_{n}$
is given by $\| y \|_2^2 = \tau_0 (y^* y)$
for $y \in N_{\tau} \rtimes_{\widetilde{\alpha}} {\mathbb{Z}}_{n}.$
Fix $x \in N_{\tau} \rtimes_{\widetilde{\alpha}} {\mathbb{Z}}_{n},$
and choose a sequence $(x_m)_{m \geq 0}$
in $A \rtimes_{\alpha} {\mathbb{Z}}_{n}$
such that $\lim_{m \to \infty} \| x_m - x \|_2 = 0.$
Then for every $\varepsilon > 0$ there exists $n_0$
such that
$\| x_k - x_l \|_2 \leq \varepsilon / \sqrt{n}$ for $k, l > n_0.$
Using the Cauchy-Schwarz inequality
for the inner product corresponding to the tracial state $\sm$
at the first step,
we get
\begin{align*}
| \sm (x_l - x_k) |^2
 & \leq \sm ((x_l - x_k)^* (x_l - x_k))
 \leq n \big[ \tau \circ E ((x_l - x_k)^* (x_l - x_k)) \big]  \\
 & = n \|x_l - x_k\|_2^2
 \leq \varepsilon^2
\end{align*}
for all $k, l > n_0.$
So the sequence $( \sm (x_m) )_{m \geq 0}$ is Cauchy.
Set ${\widetilde{\sm}} (x) = \lim_{m \to \infty} \sm (x_m).$
By a similar argument we can show
that the definition of ${\widetilde{\sm}} (x)$
does not depend on the choice of the sequence $(x_m)_{m \geq 0}.$
This proves the claim.

Since ${\widetilde{\sm}}$
is a normal trace on
$N_{\tau} \rtimes_{\widetilde{\alpha}} {\mathbb{Z}}_{n}$
satisfying ${\widetilde{\sm}} \leq n \tau_0,$
there exists an element $c$ in the center of
$N_{\tau} \rtimes_{{\widetilde{\alpha}}} {\mathbb{Z}}_{n}$
such that:
\begin{itemize}
\item
$0 \leq c \leq n.$
\item
$\tau_0 (c) = 1.$
\item
${\widetilde{\sm}} (x) = \tau_0 (x c)$
for every
$x \in N_{\tau} \rtimes_{\widetilde{\alpha}} {\mathbb{Z}}_n.$
\end{itemize}
The element $c$ is not a scalar since
${\widetilde{\sm}} \neq \ta_0.$
This contradicts the assumption that
$N_{\tau} \rtimes_{\widetilde{\alpha}} {\mathbb{Z}}_n$ is a factor.
\end{proof}

\section{The description of
 the \ca\  $D = C \rtimes_{\gamma} \gruppo$}\label{Sec:4}

In this section we describe our example of a \ca~$D$
not isomorphic to its opposite algebra.
It is the crossed product of a \ca~$C$
by an action $\gamma$ of the finite group $\gruppo.$
The algebra $C$ is the tensor product $A \otimes B,$
where $A$ is a reduced free product \ca\  %
and $B$ is a UHF~algebra.
The automorphism $\gamma$ which generates our action of $\gruppo$
is a perturbation, by an inner automorphism,
of an automorphism of the form $\alpha \otimes \beta,$
with $\alpha \in \Aut (A)$ and $\beta \in \Aut (B).$
We start by describing the two \ca s $A$ and~$B,$
and the two automorphisms $\alpha$ and~$\beta.$

\begin{definition}\label{D:B0}
Define $\varphi_n \colon M_{3^n} \to M_{3^{n + 1}}$ by
$\varphi_n (x) = {\operatorname{diag}} (x, x, x)$
for $x \in M_{3^n}.$
Let $B$ be the UHF~algebra obtained as the direct limit
of the sequence $(M_{3^n}, \varphi_n)_{n \in \N}.$
We identify $M_{3^n}$ with the tensor product
$\matrices \otimes \matrices \otimes \cdots \otimes \matrices$
($n$ copies of $\matrices$)
and think of $B$ as the infinite tensor product \ca\  %
$\bigotimes_1^{\infty} \matrices.$
\end{definition}

\begin{notation}\label{N:B0}
Denote by $B_n$ the image of the embedding
$j_n \colon M_{3^n} \to B,$ which we identify with
\[
\matrices \otimes \matrices \otimes \cdots \otimes \matrices
 \otimes 1 \otimes 1 \otimes \cdots
\]
($n$ copies of $\matrices$).
For $k \geq 1,$ let $\pi_{k} \colon \matrices \to B$ be the map
\[
\pi_k (x) = 1 \otimes 1 \otimes \cdots \otimes 1 \otimes x
 \otimes 1 \otimes 1 \otimes \cdots,
\]
with $x$ in position~$k,$
and denote by $\lambda$ the shift endomorphism of~$B,$
determined by $\lambda (\pi_k (x)) = \pi_{k + 1} (x)$
for all $k$ and all $x \in\matrices.$
Let $(e_{i, j})_{i, j = 1}^3$
be the standard system of matrix units in $\matrices.$
Define unitaries $u, v \in B$ by
\[
v = \pi_1
 \big( e^{2 \pi i/3} e_{1,1} + e^{4 \pi i/3} e_{2, 2} + e_{3, 3} \big)
\]
and
\[
u = \pi_1 (e_{3, 1}) \lambda (v^*) + \pi_1 (e_{1, 2})
   + \pi_1 (e_{2, 3}).
\]
\end{notation}

{}From the above data we construct an automorphism $\bt$ of $B$
of outer period~$3,$
that is, $3$ is the least integer $m > 0$ such that $\bt^m$ is inner.
The following result is due to Connes
(Proposition~1.6 of~\cite{Connes3}),
who proved it in the context
of convergence in the strong operator topology
by working with the $L^2$-norm.
Below we simply outline the few changes
from the von Neumann algebra setting to the \ca\  setting.

\begin{lemma}[Connes]\label{lemma}
Let the notation be as in Definition~\ref{D:B0} and Notation~\ref{N:B0}.
Define automorphisms $\bt_n \in \Aut (B)$ by
\[
\beta_n (x)
 = \Ad \big( u \lambda (u) \lambda^2 (u) \cdots \lambda^n (u) \big) (x)
\]
for all $x \in B.$
Then $\beta (x) = \lim_{n \to \infty} \beta_n (x)$
exists for every $x \in B,$
and $\beta$ is an outer automorphism of~$B$
such that $\beta^3 = \Ad (v)$ and $\beta (v) = e^{2 \pi i/3} v.$
\end{lemma}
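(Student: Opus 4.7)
I would follow Connes' argument in the von Neumann algebra setting (\cite[Proposition~1.6]{Connes1}); the only substantial adaptation is to upgrade his $L^2$-convergence to norm convergence in~$B_0$. To this end, set $U_n = u \lambda(u) \lambda^2(u) \cdots \lambda^n(u)$, so that $\beta_n = \Ad(U_n)$. The key observation is that, since $u \in B_0^{(2)}$, the element $\lambda^k(u)$ lies in the C*-subalgebra generated by $\pi_{k+1}(\matrices)$ and $\pi_{k+2}(\matrices)$, and therefore commutes with $B_0^{(N)}$ whenever $k \geq N$. Consequently, for every $x \in B_0^{(N)}$ and every $n \geq N$,
\[
\beta_n(x) = U_{n-1} \lambda^n(u) \, x \, \lambda^n(u)^* U_{n-1}^*
 = U_{n-1} x U_{n-1}^* = \beta_{n-1}(x),
\]
so the sequence $(\beta_n(x))_n$ stabilizes at $\beta_{N-1}(x)$. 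Since $\bigcup_N B_0^{(N)}$ is norm dense in~$B_0$ and each $\beta_n$ is isometric, a standard $3\varepsilon$ argument then produces a unital *-endomorphism $\beta$ of $B_0$ as the pointwise norm limit of the $\beta_n$.

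For the two algebraic identities, $\beta(v)$ is immediate: since $v = \pi_1(w) \in B_0^{(1)}$, eventual constancy gives $\beta(v) = \beta_0(v) = u v u^*$, and a direct matrix computation in $\matrices \otimes \matrices$ yields $u v u^* = e^{2\pi i/3} v$. For $\beta^3 = \Ad(v)$, it suffices to verify the identity on each $B_0^{(N)}$; tracing through the eventual-constancy description of $\beta$, one finds that for $x \in B_0^{(N)}$ one has $\beta^3(x) = \Ad(U_{N+1} U_N U_{N-1})(x)$, so the identity reduces to the assertion that $v^* U_{N+1} U_N U_{N-1}$ lies in the relative commutant of $B_0^{(N)}$ in $B_0^{(N+3)}$. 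This is a finite matrix identity in $M_{3^{N+3}}$, precisely the algebraic content of Connes' computation in~\cite{Connes1}, and transfers verbatim. The key underlying relation is $u^3 = \lambda(v^*)$, which I would check by expanding $u$ in matrix units.

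Once $\beta^3 = \Ad(v)$ is established, surjectivity of $\beta$ is automatic: setting $\sigma = \beta^2 \circ \Ad(v^*)$, one computes $\beta \circ \sigma = \beta^3 \circ \Ad(v^*) = \Ad(v) \circ \Ad(v^*) = \id_{B_0}$, so $\beta$ has a right inverse; injectivity is automatic because $\beta$ is a nonzero *-homomorphism of the simple algebra~$B_0$, and so $\beta \in \Aut(B_0)$. For outerness, $\beta$ preserves the unique trace $\tau$ on~$B_0$ (each $\beta_n$ does, hence so does the pointwise limit), so it extends to an automorphism $\widetilde{\beta}$ of the hyperfinite ${\mathrm{II}}_1$ factor $N_\tau = \pi_\tau(B_0)''$. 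This extension coincides with the automorphism shown to be outer in~\cite{Connes1}; if $\beta = \Ad(y)$ for some unitary $y \in B_0 \subseteq N_\tau$, then $\widetilde{\beta} = \Ad(y)$ on $N_\tau$ as well, a contradiction. The only genuinely nontrivial step in this plan is the finite matrix identity underlying $\beta^3 = \Ad(v)$; everything else is a routine consequence of the eventual-constancy observation.
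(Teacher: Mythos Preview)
Your proposal is correct and follows essentially the same route as the paper's proof: eventual constancy of $\beta_n$ on each $B_0^{(N)}$ to get norm convergence, the identity $\beta(v)=uvu^*=e^{2\pi i/3}v$, deferral of $\beta^3=\Ad(v)$ to Connes' finite matrix computation, surjectivity from $\beta^3=\Ad(v)$, injectivity from simplicity, and outerness via the weak closure. You supply more detail than the paper (the explicit right inverse, the relation $u^3=\lambda(v^*)$, and the form $\beta^3(x)=\Ad(U_{N+1}U_NU_{N-1})(x)$ for $x\in B_0^{(N)}$), but there is no substantive difference in approach.
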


\begin{proof}
For each fixed $k \geq 1,$ and for any $j > 0,$
the element $\lambda^{k + j}(u)$
commutes with all elements of $B_k.$
Thus $\beta_{k + j} (x) = \beta_k (x)$
for any $j > 0$ and $x \in B_k.$
Since $\bigcup_{k = 0}^{\infty} B_k$ is dense in~$B,$
it follows that $\beta (y) = \lim_{n \to \infty} \beta_{n} (y)$
exists for all $y \in B.$
This gives a homomorphism $\bt \colon B \to B,$
necessarily injective since $B$ is simple.
Clearly $\beta (v) = u v u^* = \first v.$
Next, the proof of Proposition~1.6 of~\cite{Connes3}
shows that $\beta^3 (y) = \Ad (v) (y)$ for every $y \in B.$
The equality $\beta^3 (y) = \Ad (v) (y)$
implies that $\beta$ is surjective,
so $\beta$ is an automorphism of~$B.$
Finally, $\beta$ is outer because Proposition~1.6 of~\cite{Connes3}
implies that it is outer on the type ${\mathrm{II}}_1$~factor
obtained from the Gelfand-Naimark-Segal construction
using the unique tracial state on~$B.$
\end{proof}

\begin{definition}\label{D:A0}
Define the \ca\  $A$ to be the reduced free product
\[
A = C ([0, 1]) \star_{\mathrm{r}} C ([0, 1])
 \star_{\mathrm{r}} C ([0, 1])
 \star_{\mathrm{r}} {\mathbb{C}}^3,
\]
amalgamated over~$\C,$
taken with respect to the states given by Lebesgue measure $\mu$
on the first three factors and
the state given by
$\ph (c_1, c_2, c_3) = \tfrac{1}{3} (c_1 + c_2 + c_3)$
on the last factor.
For $k = 1, 2, 3,$ let $\varepsilon_{k} \colon  C ([0, 1]) \to A$
be the inclusion of the $k$th copy of $C ([0, 1])$
in the reduced free product.
Let $u_0 = (\first, \, 1, \, \third) \in \C^3,$
and regard $u_0$ as an element of $A$ via the obvious inclusion.
\end{definition}

\begin{lemma}\label{L:A0}
The \ca\  $A$ of Definition~\ref{D:A0} is unital, simple, separable,
exact, and has a unique \tst.
\end{lemma}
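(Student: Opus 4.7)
The plan is to verify the five properties separately. Unitality and separability are immediate from the construction, since the reduced free product amalgamated over $\C$ of finitely many unital separable \ca s is again unital and separable.

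For exactness, each of the four factors is nuclear (hence exact): $C([0,1])$ is abelian and $\C^3$ is finite dimensional. The defining states---Lebesgue measure $\mu$ on $C([0,1])$ and $\ph$ on $\C^3$---are faithful tracial states. I will invoke Dykema's theorem on exactness of reduced amalgamated free products, which asserts that the reduced free product over $\C$ of exact \ca s taken with respect to faithful GNS-states is again exact. Applied (iteratively if necessary) to our four-fold free product, this yields exactness of $A_0$.

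For simplicity and uniqueness of the \tst\ I will apply Avitzour's theorem. Group the factors as $A_0 = A_1 \star_{\mathrm{r}} A_2$, where
\[
A_1 = C([0,1]) \star_{\mathrm{r}} C([0,1]) \star_{\mathrm{r}} C([0,1])
\]
carries $\ph_1 = \mu \star \mu \star \mu$ and $A_2 = \C^3$ carries $\ph$. Avitzour's theorem asserts that $A_1 \star_{\mathrm{r}} A_2$ is simple with unique \tst\ $\ph_1 \star \ph$ provided there exist unitaries $a, b \in A_1$ and $w \in A_2$ satisfying $\ph_1(a) = \ph_1(b) = \ph_1(a^* b) = 0$ and $\ph(w) = 0$. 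To verify these hypotheses, let $f \in C([0,1])$ be the unitary $f(t) = e^{2 \pi i t}$, so that $\int_0^1 f \, d\mu = 0$, and set $a = \varepsilon_1(f)$ and $b = \varepsilon_2(f)$. Then $\ph_1(a) = \ph_1(b) = 0$, while $a^* b = \varepsilon_1(f^*) \varepsilon_2(f)$ is a length-two alternating word of centered elements drawn from two distinct free factors, so $\ph_1(a^* b) = 0$ by the very definition of freeness. In $A_2$ take $w = u_0 = (\first, 1, \third)$, which is unitary, and
\[
\ph(u_0) = \tfrac{1}{3}(\first + 1 + \third) = 0.
\]

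The main obstacle I anticipate is bibliographic rather than mathematical: pinning down a published form of Avitzour's theorem and of Dykema's exactness theorem that handles either the four-fold reduced free product directly or an iterated two-fold version with faithful tracial states. If only the two-fold case is stated cleanly, I would iterate it explicitly, checking at each stage that the partial free product state remains a faithful trace (which is what is needed to keep the hypotheses of both theorems intact at each step).
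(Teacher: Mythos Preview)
Your proposal is correct and follows essentially the same route as the paper: unitality and separability are trivial, exactness comes from Dykema's theorem on reduced free products of exact \ca s, and simplicity together with uniqueness of the \tst\ come from Avitzour's criterion. The only cosmetic difference is that the paper speaks of ``several applications'' of Avitzour's corollary (i.e.\ iterating over the factors), whereas you group the first three factors into $A_1$ and apply Avitzour once to $A_1 \star_{\mathrm{r}} \C^3$; your explicit choice of unitaries $a = \varepsilon_1(f)$, $b = \varepsilon_2(f)$, $w = u_0$ verifies the hypotheses cleanly, and the associativity of the reduced free product (with respect to the given states) makes the single-application version legitimate.
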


\begin{proof}
It is trivial that $A$ is unital and separable.
That it is simple and has a unique \tst\   follows from
several applications of the corollary on page~431 of~\cite{Av}.
(The notation is as in Proposition~3.1 of~\cite{Av},
and the required unitaries are easy to find.)
Exactness follows from Theorem~3.2 of~\cite{Dykema}.
\end{proof}

\begin{lemma}\label{L:af}
Let the notation be as in Definition~\ref{D:A0}.
Then there exists a unique automorphism $\alpha \in \Aut (A)$
such that,
for all $f \in C ([0, 1]),$
\[
\alpha ( \varepsilon_{1} (f)) = \varepsilon_{2} (f),
\,\,\,\,\,\,
\alpha ( \varepsilon_{2} (f)) = \varepsilon_{3} (f),
\andeqn
\alpha ( \varepsilon_{3} (f)) = \Ad (u_0) ( \varepsilon_{1} (f)),
\]
and such that $\alpha (u_0) = \third u_0.$
Moreover, $\alpha^3 = \Ad (u_0).$
\end{lemma}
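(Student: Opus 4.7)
The plan is to construct $\alpha$ as a state-preserving unital $*$-homomorphism via the universal property of the reduced free product, and then to verify $\alpha^3 = \Ad(u_0)$, which will force $\alpha$ to be an automorphism. Write $\tau$ for the free product tracial state on $A_0$. I would define state-preserving unital $*$-homomorphisms from each of the four free factors into $A_0$ by
\[
\rho_1 = \varepsilon_2, \qquad \rho_2 = \varepsilon_3, \qquad \rho_3 = \Ad(u_0) \circ \varepsilon_1,
\]
and take $\rho_4 \colon \C^3 \to A_0$ to be the cyclic permutation $(c_1, c_2, c_3) \mapsto (c_2, c_3, c_1)$ (equivalently, $u_0 \mapsto \third u_0$) composed with the inclusion into $A_0$. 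Each $\rho_j$ preserves the distinguished state on its domain: for $\rho_3$ this uses the trace property $\tau(u_0 \varepsilon_1(f) u_0^*) = \tau(\varepsilon_1(f)) = \mu(f)$, and for $\rho_4$ the symmetry of the uniform measure on $\{1,2,3\}$.

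Let $Q_j$ denote the image of $\rho_j$ in $A_0$. The central step is to show that $Q_1, Q_2, Q_3, Q_4$ are $\tau$-free in $A_0$. I would rely on two standard facts from free probability: (i) a $\tau$-preserving automorphism sends a free family of subalgebras to a free family, as is immediate from the alternating-centered-word characterisation of freeness; and (ii) associativity of freeness: in a reduced free product, the $C^*$-subalgebra generated by any subfamily of the free factors is itself their reduced free product and is free from the remaining factors. Let $E \subset A_0$ be the $C^*$-subalgebra generated by $\varepsilon_1(C([0,1]))$ and $\C^3$; by (ii), $E$ is the reduced free product of these two factors, and $E, \varepsilon_2(C([0,1])), \varepsilon_3(C([0,1]))$ are $\tau$-free in $A_0$. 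Since $u_0 \in \C^3 \subset E$ and $\tau$ is tracial, $\Ad(u_0)$ restricts to a $\tau$-preserving automorphism of $E$, and (i) applied to the free pair $\{\varepsilon_1(C([0,1])), \C^3\}$ inside $E$ yields that $\{Q_3, \C^3\}$ is free in $E$, noting $\Ad(u_0)(\C^3) = \C^3$ since $\C^3$ is abelian. Combining with the freeness of $E, Q_1, Q_2$ in $A_0$ gives the desired $\tau$-freeness of $Q_1, Q_2, Q_3, Q_4$.

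With freeness established, the universal property of the reduced free product with respect to faithful states produces a unique unital $*$-homomorphism $\alpha \colon A_0 \to A_0$ extending the $\rho_j$, and $\tau \circ \alpha = \tau$ is automatic. Uniqueness of $\alpha$ in the statement of the lemma follows because $\varepsilon_1(C([0,1])), \varepsilon_2(C([0,1])), \varepsilon_3(C([0,1])), u_0$ generate $A_0$. To verify $\alpha^3 = \Ad(u_0)$ it is enough to check on generators: a short computation using $\alpha(u_0) = \third u_0$ together with $\third \cdot \first = 1$ gives $\alpha^3(\varepsilon_k(f)) = u_0 \varepsilon_k(f) u_0^*$ for $k = 1, 2, 3$ and $\alpha^3(u_0) = (\third)^3 u_0 = u_0 = \Ad(u_0)(u_0)$. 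Since $\Ad(u_0)$ is an automorphism, so is $\alpha$, with inverse $\Ad(u_0^*) \circ \alpha^2$.

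The main obstacle I anticipate is the freeness check for the $Q_j$. A direct verification via alternating-centered-word moment computations is cumbersome, because $Q_3 = u_0 \varepsilon_1(C([0,1])) u_0^*$ visibly uses elements of $\C^3 = Q_4$ and is not free from $Q_4$ in the original four-factor decomposition of $A_0$. The clean resolution is to compress the conjugation into the smaller reduced free product $E = \varepsilon_1(C([0,1])) \star_{\mathrm{r}} \C^3$, where $\Ad(u_0)$ is an internal $\tau$-preserving automorphism that therefore preserves freeness; associativity of freeness then combines everything into the desired four-fold free decomposition.
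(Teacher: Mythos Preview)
Your proof is correct, but it takes a different route from the paper's.  The paper first defines an endomorphism $\alpha_0$ of the \emph{full} free product $C([0,1]) \star C([0,1]) \star C([0,1]) \star \C^3$ (where existence is immediate from the coproduct universal property), checks directly that $\alpha_0^3 = \Ad(u_0)$ there, and then observes---using $\varphi(u_0) = 0$---that $\alpha_0$ preserves the free product state $\mu \star \mu \star \mu \star \varphi$, so that $\alpha_0$ descends to the reduced free product~$A_0$.  Your approach stays entirely inside the reduced free product and invokes its universal property instead, which obliges you to verify that the images $Q_1, Q_2, Q_3, Q_4$ are $\tau$-free; you handle this neatly by grouping $\varepsilon_1(C([0,1]))$ and $\C^3$ into the subalgebra~$E$, noting that $\Ad(u_0)$ is an internal $\tau$-preserving automorphism of~$E$, and then using associativity of freeness.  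The paper's route is shorter because the full free product has no freeness hypothesis to check, and the state-preservation computation (hidden behind ``easily checked'') is essentially the same alternating-centered-word argument that underlies your freeness verification, just packaged differently.  Your approach, on the other hand, makes the free-probability structure explicit and avoids passing through an auxiliary algebra.
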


\begin{proof}
We can identify ${\mathbb{C}}^3$ with the
universal \ca\  generated by a unitary of order~$3,$
with the generating unitary taken to be~$u_0.$
The definition of $\alpha (u_0)$ is then legitimate
because $\big( \third u_0 \big)^3 = 1.$
It is now obvious that
if we replace $A$ by the full free product
$C ([0, 1]) \star C ([0, 1]) \star C ([0, 1]) \star {\mathbb{C}}^3$
(taking $u_0$ and the values of $\ep_k$ to be in the full free product),
then the definition gives an endomorphism $\af_0$
of the full free product.
Moreover, one checks immediately that $\af_0^3 = \Ad (u_0)$
on the full free product.
Finally,
using $\ph (u_0) = 0,$
it is easily checked that $\af_0$ preserves the
free product state
$\mu \star \mu \star \mu \star \ph.$
(See Proposition~1.1 of~\cite{Av} for the definition
of the free product of two states.)
Therefore $\af_0$ descends to an automorphism $\af$ of
$C ([0, 1]) \star_{\mathrm{r}} C ([0, 1])
 \star_{\mathrm{r}} C ([0, 1])
 \star_{\mathrm{r}} {\mathbb{C}}^3$
such that $\alpha^3 = \Ad (u_0).$
\end{proof}

\begin{definition}\label{D:C0}
Let $A,$ $u_0 \in A,$ and $\af \in \Aut (A)$
be as in Definition~\ref{D:A0}.
Let $B$ and $v \in B$ be as in Definition~\ref{D:B0},
and let $\bt \in \Aut (B)$ be as in Lemma~\ref{lemma}.
We define $C = A \otimes B.$
(This tensor product is unambiguous since $B$ is nuclear.)
Choose and fix a unitary $w \in C^* (u_0 \otimes v) \subseteq C$
such that $w^3 = (u_0 \otimes v)^*$
(see Lemma~\ref{L:gm} below),
and define $\gm \in \Aut (C)$ by
$\gm = \Ad (w) \circ (\af \otimes \bt).$
We also write $\gm$ for the action of $\gruppo$ on~$C$
generated by $\Ad (w) \circ (\af \otimes \bt).$
(See Lemma~\ref{L:gm} below.)
\end{definition}

\begin{lemma}\label{L:gm}
Let the notation be as in Definition~\ref{D:C0}.
Then there is a unitary $w \in C^* (u_0 \otimes v) \subseteq C$
such that $w^3 = (u_0 \otimes v)^*,$
and the resulting $\gm \in \Aut (C)$ satisfies $\gm^3 = \id_{C}.$
\end{lemma}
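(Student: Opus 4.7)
The plan is to exploit the fact that both $u_0$ and $v$ are unitaries of order~$3$. Indeed, $u_0 = (\first, 1, \third) \in {\mathbb{C}}^3 \subseteq A_0$ satisfies $u_0^3 = 1$, and a direct computation using $\first^3 = 1$ shows $v^3 = 1$. Consequently $z := u_0 \otimes v$ satisfies $z^3 = 1$, so $C^* (z) \subseteq C_0$ is commutative and finite-dimensional, spanned by the spectral projections of~$z$ corresponding to eigenvalues in $\{1, \first, \third\}$.

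To construct~$w$, I would diagonalize $z$ in $C^*(z)$, pick a cube root of $\overline{\ld}$ in the unit circle for each eigenvalue $\ld$ of~$z$, and define $w$ by functional calculus. Then $w \in C^* (z)$ is a unitary with $w^3 = z^*$, as required. (Since $C^*(z)$ is finite-dimensional, no continuous-branch issue arises.)

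For the identity $\gm^3 = \id_{C_0}$, I would use the standard unwinding
\[
\gm^3 = \Ad \bigl( w \cdot (\af \otimes \bt)(w) \cdot (\af \otimes \bt)^2 (w) \bigr)
 \circ (\af \otimes \bt)^3,
\]
and simplify using two observations. First, $(\af \otimes \bt)(z) = \af(u_0) \otimes \bt(v) = (\third u_0) \otimes (\first v) = z$ by Lemmas~\ref{L:af} and~\ref{lemma}; since $(\af \otimes \bt)$ is a \mbox{*-automorphism} fixing the generator of the commutative algebra $C^*(z)$, it fixes every element of $C^*(z)$, and in particular it fixes~$w$. Second, $(\af \otimes \bt)^3 = \Ad (u_0) \otimes \Ad (v) = \Ad (u_0 \otimes v) = \Ad (z)$, again by Lemmas~\ref{L:af} and~\ref{lemma}. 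Substituting,
\[
\gm^3 = \Ad (w^3) \circ \Ad (z) = \Ad (w^3 z) = \Ad (z^* z) = \id_{C_0}.
\]

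The only conceptual step is the invariance $(\af \otimes \bt)(z) = z$, which boils down to the phase cancellation $\first \cdot \third = 1$; this is precisely what makes a cube root of $z^*$ (rather than of some other scalar) the correct inner perturbation for forcing $\gm$ to have order~$3$. Everything else is bookkeeping, and I do not anticipate a serious obstacle.
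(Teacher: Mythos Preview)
Your proposal is correct and follows essentially the same approach as the paper's proof: the paper observes that $u_0 \otimes v$ has finite spectrum (you sharpen this to $(u_0 \otimes v)^3 = 1$), obtains $w$ by functional calculus, checks $(\af \otimes \bt)(u_0 \otimes v) = u_0 \otimes v$ so that $(\af \otimes \bt)(w) = w$, and then declares the verification of $\gm^3 = \id_{C_0}$ a routine calculation---which you have written out explicitly via the unwinding formula. The only difference is level of detail, not method.
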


\begin{proof}
Since
${\operatorname{sp}} (u_0), \, {\operatorname{sp}} (v)
 \subseteq \big\{ 1, \, \first, \, \third \big\},$
it immediately follows that
${\operatorname{sp}} (u_0 \otimes v)$ is finite.
Therefore there is a unitary $w \in C^* (u_0 \otimes v)$
such that $w^3 = (u_0 \otimes v)^*.$
Moreover, one checks that
$(\af \otimes \bt) (u_0 \otimes v) = u_0 \otimes v,$
whence $(\af \otimes \bt) (w) = w.$
It is now a routine calculation to check that
$\big( \Ad (w) \circ (\af \otimes \bt) \big)^3 = \id_{C}.$
\end{proof}

\begin{lemma}\label{L:Uniq}
The \ca\  $C$ of Definition~\ref{D:C0}
is exact and has a unique \tst, namely the tensor product \tst.
\end{lemma}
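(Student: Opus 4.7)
The plan is straightforward, as the assertion has two nearly independent parts.

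Exactness of $C_0$ is immediate. By Lemma~\ref{L:A0}, $A_0$ is exact; $B_0$ is an AF~algebra, so is nuclear and hence exact; and the class of exact \ca s is closed under minimal tensor products (Proposition~7.1 of~\cite{Kr2}, as noted after Definition~2.1). Because $B_0$ is nuclear the tensor product $A_0 \otimes B_0$ is unambiguous, so $C_0$ is exact.

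For the tracial state, let $\ta_A$ and $\ta_B$ denote the unique tracial states on $A_0$ (Lemma~\ref{L:A0}) and on $B_0$ (standard for UHF~algebras). The product state $\ta := \ta_A \otimes \ta_B$ is clearly a tracial state on $C_0,$ so the content is uniqueness. Let $\sm$ be any tracial state on $C_0.$ I would use a slice-map argument: for each self-adjoint $b \in B_0,$ define $\ph_b \colon A_0 \to \C$ by $\ph_b (a) = \sm (a \otimes b).$ Writing $b = b_+ - b_-$ with $b_\pm \geq 0$ shows $\ph_b$ is a difference of positive functionals, and the identity
\[
(a_1 a_2) \otimes b = (a_1 \otimes 1)(a_2 \otimes b) = (a_2 \otimes b)(a_1 \otimes 1)
\]
together with the trace property of $\sm$ shows that $\ph_b$ is tracial. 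For $b \geq 0,$ if $\ph_b (1) = 0$ then $\ph_b = 0$ by the Cauchy-Schwarz inequality, while otherwise $\ph_b / \ph_b (1)$ is a tracial state on $A_0$ and hence equals $\ta_A$ by Lemma~\ref{L:A0}; either way $\ph_b (a) = \sm (1 \otimes b) \ta_A (a),$ and this extends to all $b$ by linearity.

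Next, the map $b \mapsto \sm (1 \otimes b)$ is a tracial state on~$B_0$ (the trace property follows from the same identity applied to $1 \otimes b_1 b_2$), so by uniqueness it equals $\ta_B.$ Combining the two steps,
\[
\sm (a \otimes b) = \ta_A (a) \ta_B (b) = \ta (a \otimes b)
\]
for all elementary tensors $a \otimes b.$ Since the algebraic tensor product is norm-dense in $C_0$ and $\sm, \ta$ are bounded, $\sm = \ta,$ proving uniqueness. There is no serious obstacle here; the only point requiring care is the degenerate case $\ph_b (1) = 0,$ which is disposed of by Cauchy-Schwarz as above.
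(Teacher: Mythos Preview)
Your proof is correct. The exactness argument matches the paper's exactly (both invoke Lemma~\ref{L:A0} for $A_0,$ nuclearity of the UHF~algebra $B_0,$ and Proposition~7.1 of~\cite{Kr2} for the tensor product).

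For uniqueness of the tracial state, the paper takes a different and much shorter route: it simply cites Corollary~6.13 of~\cite{CP} (Cuntz--Pedersen), which gives uniqueness of the tracial state on the minimal tensor product directly from uniqueness on each factor. Your slice-map argument, by contrast, proves this special case from scratch. What you gain is self-containment---the reader need not look up~\cite{CP}---and the argument is elementary enough that it costs only a paragraph. What the paper's approach buys is brevity and a pointer to the general phenomenon (the Cuntz--Pedersen result covers more than the simple unital case with unique trace). Either is perfectly acceptable here; your version would serve well in a context where one wants to minimize external citations.
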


\begin{proof}
This is true for $A$ by Lemma~\ref{L:A0},
and for $B$ because $B$ is a UHF~algebra.
For exactness, apply Proposition~7.1(ii) of~\cite{Kr2},
and for existence of a unique \tst,
apply Corollary~6.13 of~\cite{CP}.
\end{proof}

\begin{definition}\label{D:D}
Define the \ca\  $D$ by $D = C \rtimes_{\gamma} \gruppo.$
\end{definition}

\section{The \ca\  $D = C \rtimes_{\gamma} \gruppo$
 is not anti-isomorphic to itself}\label{Sec:5}

\begin{definition}
Let $E$ be an arbitrary \ca.
The {\emph{opposite algebra}} $E^{\mathrm{op}}$ of~$E$
is by definition the \ca\  whose underlying Banach space
and adjoint operation
are the same as for~$E,$
while the product of $x$ by $y$ is equal to $y x$ instead of $x y.$
\end{definition}

Our main result is that the \ca\  %
$D = (A \otimes B) \rtimes_{\gamma} \gruppo,$
given in Definition~\ref{D:D},
is not isomorphic to its opposite algebra.
We reduce the statement about the nonisomorphism
of $D$ and $D^{\mathrm{op}}$
to a statement about the nonisomorphism
of the von Neumann algebra $M$ associated to $D$
(via the Gelfand-Naimark-Segal representation coming from
the obvious \tst\  of~$D$)
and $M^{\mathrm{op}}.$

To this end, we first examine the weak operator closures of the
images of $A$ and~$B$
in the Gelfand-Naimark-Segal representations
coming from their \tst s.
These algebras were defined so that the statements in
the following lemmas would hold.

\begin{lemma}\label{L:B0P}
Let $B$ be as in Definition~\ref{D:B0},
and let $\bt \in \Aut (B)$ be as in Lemma~\ref{lemma}.
Let $R_0$ be the weak operator closure of the image of~$B$
under the Gelfand-Naimark-Segal representation coming from
the unique \tst\  of~$B,$
and let ${\widetilde{\bt}}$ be the automorphism of~$R_0$
which extends~$\bt.$
Then $R_0$ is isomorphic to the hyperfinite factor $R$
of type~${\mathrm{II}}_1.$
The isomorphism $R_0 \cong R$ can be chosen so that
${\widetilde{\beta}}$ becomes the automorphism
of Proposition~1.6 of~\cite{Connes3},
for the case $p = 3$ and $\gm = \first.$
In particular, with $v$ as in Notation~\ref{N:B0},
we have ${\widetilde{\beta}}^3 = \Ad (v)$
and ${\widetilde{\beta}} (v) = e^{2 \pi i/3} v.$
\end{lemma}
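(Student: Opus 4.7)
\bigskip
\noindent\textbf{Proof proposal.}
The strategy is to view this lemma as a direct transfer of Connes' construction from the von Neumann algebra setting to the C*~setting and back. Since $B_0$ is the UHF~algebra $\bigotimes_1^\infty M_3$ with its unique tracial state $\tau_0$ equal to the infinite product of the normalized traces on $M_3,$ the GNS~representation of $B_0$ with respect to $\tau_0$ is the standard construction of the hyperfinite II$_1$~factor $R$ as an infinite tensor product. Fixing such an identification $R_0 \cong R$ sends the tower $B_0^{(n)}$ onto the standard finite-dimensional subfactors of~$R,$ carries the shift endomorphism $\lambda$ to Connes' shift, and sends the unitaries $u$ and $v$ of Notation~\ref{N:B0} to the unitaries with the same names (and the same formulas) in Proposition~1.6 of~\cite{Connes1}, specialized to $p = 3$ and $\gm = e^{2 \pi i / 3}.$

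Next I would check that $\beta$ extends to a normal automorphism $\widetilde{\bt}$ of~$R_0.$ Each approximating automorphism $\beta_n = \Ad \big( u \lambda (u) \cdots \lambda^n (u) \big)$ is inner, hence preserves $\tau_0.$ Since $\beta$ is the pointwise norm limit of the $\beta_n,$ it preserves $\tau_0$ as well. A trace-preserving $*$-automorphism of a \ca\ with a faithful tracial state extends uniquely to a normal $*$-automorphism of the GNS von Neumann algebra, giving the desired $\widetilde{\bt} \in \Aut (R_0).$

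Under the identification above, both $\widetilde{\bt}$ and the automorphism $\bt_{\mathrm{Connes}}$ of Proposition~1.6 of~\cite{Connes1} are normal automorphisms of $R_0 \cong R,$ and both agree with $\beta_n$ on $B_0^{(n)}$ for all sufficiently large~$n,$ by the eventual-constancy observation in the proof of Lemma~\ref{lemma} (and the analogous observation in Connes' argument, which works in the $L^2$~norm but gives the same value on the dense finite-dimensional subalgebras). They therefore coincide on the norm-dense, hence WOT-dense, subalgebra $\bigcup_n B_0^{(n)},$ and by normality they coincide on all of $R_0.$ This gives the identification of $\widetilde{\bt}$ with Connes' automorphism.

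The concluding identities $\widetilde{\bt}^3 = \Ad (v)$ and $\widetilde{\bt} (v) = e^{2 \pi i/3} v$ are immediate: they hold on $B_0$ by Lemma~\ref{lemma}, and both sides of each equation are weakly continuous, so they persist under extension to~$R_0.$ The only subtle point in the whole argument is the matching of $\widetilde{\bt}$ with $\bt_{\mathrm{Connes}},$ and this comes down to recognizing that the two constructions use the same formulas in the same UHF~algebra, with one taking the norm limit first and then closing weakly, and the other closing weakly first and then taking the WOT~limit.
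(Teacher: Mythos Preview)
Your proposal is correct and is essentially a careful unpacking of what the paper's own proof says in one line: ``This is obvious from the definitions and from the proof of Proposition~1.6 of~\cite{Connes1}.'' The paper treats the identification of $R_0$ with~$R$ and of $\widetilde{\beta}$ with Connes' automorphism as self-evident from the fact that $B_0$ and its data were set up precisely to match Connes' construction; you have simply made the matching explicit by spelling out the GNS identification, the trace-preservation argument for extending~$\beta$, and the density argument for equality of the two automorphisms.
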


\begin{proof}
This is obvious from the definitions
and from the proof of Proposition~1.6 of~\cite{Connes3}.
\end{proof}

\begin{lemma}\label{L:A0P}
Let $A$ be as in Definition~\ref{D:A0},
and let $\af \in \Aut (A)$ be as in Lemma~\ref{L:af}.
Let $N_0$ be the weak operator closure of the image of $A$
under the Gelfand-Naimark-Segal representation coming from
the unique \tst\  of~$A$ (Lemma~\ref{L:A0}),
and let ${\widetilde{\alpha}}$ be the automorphism of~$N_0$
which extends~$\af.$
Then $N_0$ is a factor of type~${\mathrm{II}}_1$ isomorphic to the free
product $N$ of three copies of $L^{\infty} ([0,1])$ (using
Lebesgue measure) and the group von Neumann algebra
${\mathcal{L}} (\gruppo) \cong \C^3$
(using the tracial state $\ph$ of Definition~\ref{D:A0}).
With
\[
\ld_k \colon L^{\infty} ([0,1])
 \to L^{\infty} ([0,1]) \star L^{\infty} ([0,1])
      \star L^{\infty} ([0,1]) \star {\mathcal{L}} (\gruppo),
\]
for $k = 1, 2, 3,$ being the inclusion of the $k$th free factor,
and with $u_0 \in \C^3 \cong {\mathcal{L}} (\gruppo)$ being as in
Definition~\ref{D:A0},
the isomorphism $N_0 \cong N$ can be chosen so that, on~$N,$
for all $g \in L^{\infty} ([0,1]),$ we have
\[
{\widetilde{\alpha}} (\lambda_{1} (g)) = \lambda_{2} (g),
\,\,\,\,\,\,
{\widetilde{\alpha}} (\lambda_{2} (g)) = \lambda_{3} (g),
\andeqn
{\widetilde{\alpha}} (\lambda_{3} (g)) = \Ad (u_0) (\lambda_{1} (g)),
\]
and ${\widetilde{\alpha}} (u_0) = \third u_0.$
Moreover, ${\widetilde{\alpha}}^3 = \Ad (u_0).$
\end{lemma}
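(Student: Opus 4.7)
The plan is first to identify $N_0$ as the von Neumann algebra free product $N$ via the GNS representation, and then to transport the automorphism. To carry this out, I would begin by observing that the unique tracial state $\ta$ on $A_0$ (Lemma~\ref{L:A0}) must coincide with the reduced free product state $\mu \star \mu \star \mu \star \ph$ appearing in Definition~\ref{D:A0}, since the latter is itself a tracial state: each of the four amalgamated factors is abelian (so its defining state is automatically tracial), and the free product over $\C$ of tracial states on tracial C*-algebras is again tracial. Hence the GNS representation of $(A_0, \ta)$ coincides, up to unitary equivalence, with the GNS representation used to build the reduced free product, and by the standard compatibility between the reduced C*-algebra free product of finite algebras with traces and Voiculescu's free product of finite von Neumann algebras, $N_0$ is canonically identified with
\[
N = L^{\infty} ([0,1]) \star L^{\infty} ([0,1]) \star L^{\infty} ([0,1])
 \star {\mathcal{L}} (\gruppo).
\]
Under this identification each $\ep_k ( C ([0,1]) )$ generates the $k$th copy of $L^{\infty} ([0,1]),$ and the copy of $\C^3 \subseteq A_0$ generates $\mathcal{L} (\gruppo);$ in particular the generators $\ld_k (g)$ and the unitary $u_0$ in the statement arise as the weak operator limits of $\ep_k (f)$ and of the $u_0$ from Definition~\ref{D:A0}.

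For factoriality, I would argue as follows. Any normal tracial state on $N_0$ restricts to a tracial state on the $\sigma$-weakly dense $*$-subalgebra $A_0,$ and two normal states that agree on a $\sigma$-weakly dense $*$-subalgebra must be equal, so uniqueness of $\ta$ on $A_0$ forces $N_0$ to have a unique normal tracial state. A finite von Neumann algebra with a unique normal tracial state has trivial center and is therefore a factor. The type is $\mathrm{II}_1$ because $N_0$ contains the diffuse abelian subalgebra $\ld_1 ( L^{\infty} ([0,1]) )$ and hence has no minimal projections.

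For the automorphism, I would use that $\af$ preserves the free product state $\mu \star \mu \star \mu \star \ph$ (this is verified inside the proof of Lemma~\ref{L:af}) and therefore preserves $\ta.$ A $\ta$-preserving $*$-automorphism of a C*-algebra extends uniquely to a normal automorphism of its GNS von Neumann algebra, yielding $\widetilde{\af}.$ Under the identification $N_0 \cong N,$ the formulas for $\widetilde{\af} ( \ld_k (g) )$ and $\widetilde{\af} (u_0)$ follow by normal continuity from the corresponding C*-level formulas $\af ( \ep_k (f) ) = \ldots$ and $\af (u_0) = \third u_0$ of Lemma~\ref{L:af}, and $( \widetilde{\af} )^3 = \Ad (u_0)$ descends from the identity $\af^3 = \Ad (u_0).$

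The main obstacle is the first step: one has to cite or verify the compatibility between the weak operator closure of a reduced free product under its canonical GNS representation and Voiculescu's von Neumann algebra free product. Once that identification is in place, the remaining assertions about factoriality, type, and the automorphism are essentially formal consequences of uniqueness of the trace and normal continuity.
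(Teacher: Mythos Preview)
Your proposal is correct and is essentially a careful unpacking of what the paper takes for granted: the paper's own proof reads, in full, ``This is obvious from the definitions.'' Your argument---identifying $\tau$ with the free product state by uniqueness, invoking the standard compatibility of the C*-reduced free product GNS closure with the von Neumann free product, deducing factoriality from uniqueness of the normal trace, and extending $\alpha$ by normal continuity---is exactly the content hidden behind that one line, and each step is sound.
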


\begin{proof}
This is obvious from the definitions.
\end{proof}

The nonisomorphism of the von Neumann algebra $M$
of the next proposition and its opposite
algebra was proved in~\cite{Viola}.
We recall it here for the convenience of the reader,
and refer to the original paper for the proof.

\begin{proposition}\label{P:V}
Let $N,$ $u_0,$ and ${\widetilde{\af}} \in \Aut (N)$
be as in Lemma~\ref{L:A0P},
and let $R,$ $v,$ and ${\widetilde{\beta}} \in \Aut (R)$
be as in Lemma~\ref{L:B0P}.
Let $w$ be as in Definition~\ref{D:C0},
regarded as an element of the von Neumann algebra
tensor product $N {\overline{\otimes}} R.$
Then
$\Ad (w)
 \circ \big( {\widetilde{\alpha}} \otimes {\widetilde{\beta}} \big)$
generates an action ${\widetilde{\gamma}}$ of $\gruppo$
on $N {\overline{\otimes}} R,$
and
$M = (N {\overline{\otimes}} R) \rtimes_{{\widetilde{\gamma}}} \gruppo$
is a factor of type~${\mathrm{II}}_1$
which is not isomorphic to its opposite von Neumann algebra.
\end{proposition}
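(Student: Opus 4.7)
The plan has three parts: verify that $\widetilde{\gamma}$ is a well-defined outer $\Z_3$-action on $N {\overline{\otimes}} R$ so that $M$ is a II$_1$ factor; extract from $M$ an invariant encoding the Connes obstructions of the two building blocks $\widetilde{\alpha}$ and $\widetilde{\beta}$; and show that this invariant is reversed under any hypothetical $^*$-anti-automorphism, producing the contradiction.

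For the first part I would repeat, at the von Neumann level, the computation of Lemma~\ref{L:gm}. Using $\widetilde{\alpha}(u_0) = \third u_0$ and $\widetilde{\beta}(v) = e^{2 \pi i/3} v$ from Lemmas~\ref{L:A0P} and~\ref{L:B0P}, the element $u_0 \otimes v$ is fixed by $\widetilde{\alpha} \otimes \widetilde{\beta}$ and hence so is $w \in C^*(u_0 \otimes v);$ then $\widetilde{\gamma}^3 = \Ad\bigl(w^3 (u_0 \otimes v)\bigr) = \id.$ Since $N$ is a II$_1$ factor (a reduced free product with diffuse components) and $R$ is the hyperfinite II$_1$ factor, $N {\overline{\otimes}} R$ is a II$_1$ factor, and factoriality of $M$ reduces to outerness of $\widetilde{\gamma}$ and $\widetilde{\gamma}^2.$ The inner perturbation by $\Ad(w)$ is harmless, so this in turn reduces to outerness of $\widetilde{\alpha}^k \otimes \widetilde{\beta}^k$ for $k = 1, 2,$ which follows from outerness of $\widetilde{\beta}$ (Lemma~\ref{L:B0P}) and of $\widetilde{\beta}^2$ (else $\widetilde{\beta}^3 = \Ad(v)$ would force $\widetilde{\beta}$ itself to be inner) together with the standard fact that an automorphism of the form $\sigma \otimes \tau$ on a tensor product of II$_1$ factors is inner only if both $\sigma$ and $\tau$ are.

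For the substantive step I would follow the strategy of Connes~\cite{Connes1}. The Connes obstructions of the two building blocks are $e^{2 \pi i/3}$ for $\widetilde{\beta}$ (encoded in $\widetilde{\beta}^3 = \Ad(v)$ with $\widetilde{\beta}(v) = e^{2 \pi i/3} v$) and $\third$ for $\widetilde{\alpha}.$ These multiply to $1$ in the tensor product---which is precisely what permits the normalization of $\widetilde{\alpha} \otimes \widetilde{\beta}$ to an honest $\Z_3$-action by the inner perturbation $\Ad(w)$---but they persist in $M$ through the canonical index-$3$ inclusion $N {\overline{\otimes}} R \subset M,$ whose standard invariant together with the tensor-factor splitting records the two obstructions separately. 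A $^*$-anti-automorphism of a II$_1$ factor sends an outer period-$p$ automorphism of obstruction $\zeta$ to one of obstruction $\overline{\zeta}$ (the equations $\sigma^p = \Ad(v)$ and $\sigma(v) = \zeta v$ become $\sigma^p = \Ad(v^*)$ and $\sigma(v^*) = \overline{\zeta} v^*$ under reversal of multiplication), so any isomorphism $M \to M^{\mathrm{op}}$ would conjugate $\widetilde{\gamma}$ to a $\Z_3$-action whose component obstructions are complex-conjugated.

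The main obstacle is turning this heuristic into a rigorous statement: one must produce an invariant of $M$ \emph{as a pure II$_1$ factor} (not of the inclusion $N {\overline{\otimes}} R \subset M$) that encodes the individual obstructions. This is the technical core of~\cite{Viola}, and I would reproduce its subfactor-theoretic argument: one exhibits a subfactor of $M$ intrinsically determined by $M,$ whose principal graph (or standard invariant) genuinely records the Connes obstructions. The hypothesis that $-1$ is not a square modulo~$3$ is essential: it forces $e^{2 \pi i/3}$ and $\third$ to lie in distinct orbits of the natural $\Z_3^{\times}$-action on $\mu_3 \setminus \{1\}$ by automorphisms of $\Z_3$ combined with inner perturbations, so the complex-conjugated action is not cocycle-conjugate to the original and the resulting subfactor invariants are non-isomorphic, contradicting the hypothetical anti-isomorphism.
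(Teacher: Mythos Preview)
The paper's own proof of this proposition is essentially a citation: the $\Z_3$-action is as in Lemma~\ref{L:gm}, and both factoriality of~$M$ and $M \not\cong M^{\mathrm{op}}$ are referred to~\cite{Viola}. Your first paragraph (the $\Z_3$-action and factoriality) matches this and is fine. Your heuristic in the second paragraph---that an anti-isomorphism conjugates Connes obstructions, and that the obstructions of $\widetilde{\alpha}$ and $\widetilde{\beta}$ are complex conjugates of one another---is also the right underlying idea.

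Where your sketch drifts from the actual argument is in the description of the invariant that makes this heuristic rigorous. You speak of ``a subfactor of $M$ intrinsically determined by $M$, whose principal graph (or standard invariant) genuinely records the Connes obstructions.'' That is not the mechanism. As the paper spells out in Section~\ref{Sec:8} for general~$p$ (following~\cite{Viola}), the invariant is extracted through Connes' $\chi$-invariant: one shows $\chi(M) \cong \Z_{p^2}$, lifts the unique order-$p$ subgroup to an honest $\Z_p$-action $\rho$ on~$M$, and then, by Takesaki duality, $M \rtimes_\rho \Z_p \cong (N \,\overline{\otimes}\, R) \otimes B(l^2(\Z_p))$ with the dual action identified with $\widetilde{\gamma} \otimes \Ad(\lambda)$. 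Because $N$ is full, every central sequence in $N \,\overline{\otimes}\, R$ lives in~$R$, so the dual action factors \emph{uniquely} (up to inner) as approximately inner times centrally trivial, and the centrally trivial part is $\widetilde{\alpha} \otimes \id$. Its obstruction set $S_p(M) = \{-l^2 : l \in \Z_p \setminus \{0\}\}$ is then an invariant of $M$ alone, and $S_p(M^{\mathrm{op}}) = -S_p(M)$. No principal graphs or Jones-type standard invariants enter; the intrinsic data is $\chi(M)$ and the centrally trivial/approximately inner decomposition, which is where fullness of~$N$ is used decisively. Your proposal never invokes fullness of~$N$, and without it there is no canonical way to isolate the $\widetilde{\alpha}$-obstruction from~$M$.
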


\begin{proof}
That ${\widetilde{\gamma}}$ generates an action of $\gruppo$
follows as in the proof of Lemma~\ref{L:gm}.
(Or see~\cite{Viola}.)
That $M$ is a factor is in Section~4 of~\cite{Viola},
and $M \not\cong M^{\mathrm{op}}$ is Theorem~6.1 of~\cite{Viola}.
\end{proof}

\begin{proposition}\label{P:UniqTrD}
The \ca\  $D = C \rtimes_{\gamma} \gruppo$ of Definition~\ref{D:D}
is simple, separable, unital, exact, and has a unique tracial state.
\end{proposition}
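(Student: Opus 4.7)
The plan is to treat the five assertions separately, with the two substantive ones---uniqueness of the tracial state and simplicity---both reduced to Proposition~\ref{P:V}. Unitality is immediate since $C_0$ is unital and a crossed product of a unital \ca\ by a finite group is unital. Separability is clear because $A_0$ and $B_0$ are separable and a crossed product by a finite group preserves separability. For exactness, $C_0$ is exact by Lemma~\ref{L:Uniq}, and the class of exact \ca s is closed under reduced crossed products by amenable locally compact groups (Proposition~7.1 of~\cite{Kr2}); in particular $D$ is exact.

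For uniqueness of the \tst\ I will apply Lemma~\ref{utrace} with $A = C_0$, $n = 3$, and $\alpha = \gamma$. By Lemma~\ref{L:Uniq}, $C_0$ has a unique \tst\ $\tau$, and since $\tau$ is the tensor product of the unique \tst s on $A_0$ and on $B_0$, the Gelfand-Naimark-Segal representation of $C_0$ factors as the tensor product of the GNS representations of the two factors. Taking weak operator closures yields $N_\tau \cong N_0 \,\overline{\otimes}\, R_0 \cong N \,\overline{\otimes}\, R$ in the notation of Lemmas~\ref{L:B0P} and~\ref{L:A0P}. Under this identification, the extension $\widetilde{\gamma}$ of $\gamma$ to $N_\tau$ is precisely $\Ad(w) \circ (\widetilde{\alpha} \otimes \widetilde{\beta})$, so $N_\tau \rtimes_{\widetilde{\gamma}} \gruppo$ coincides with the type~${\mathrm{II}}_1$ factor $M$ of Proposition~\ref{P:V}. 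The hypotheses of Lemma~\ref{utrace} are thus verified, and $D$ has a unique tracial state.

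For simplicity I first note that $C_0$ itself is simple: $A_0$ is simple by Lemma~\ref{L:A0}, $B_0$ is a simple nuclear UHF algebra, and the minimal tensor product of a simple \ca\ with a simple nuclear \ca\ is simple (by the classical result of Takesaki). It then suffices, by the standard criterion for simplicity of crossed products of simple \ca s by finite groups (Kishimoto's theorem), to show that every non-trivial power of $\gm$ is outer, and since $\gruppo$ has prime order this reduces to outerness of $\gm$ itself. If $\gm$ were equal to $\Ad(y)$ for some unitary $y \in C_0$, then by weak operator continuity the extension $\widetilde{\gamma}$ would equal $\Ad(\pi_\tau(y))$ and be inner on $N_\tau$; but then $N_\tau \rtimes_{\widetilde{\gamma}} \gruppo = M$ would have non-trivial center, contradicting the factor property in Proposition~\ref{P:V}. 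The main obstacle is simply the bookkeeping of these identifications; once the GNS-level picture is arranged correctly, every assertion follows from Proposition~\ref{P:V} and the earlier lemmas.
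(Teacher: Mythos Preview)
Your proof is correct and matches the paper's approach for separability, unitality, exactness, and uniqueness of the tracial state (the paper likewise invokes Lemma~\ref{utrace} together with Lemmas~\ref{L:Uniq}, \ref{L:B0P}, \ref{L:A0P} and Proposition~\ref{P:V}). The only divergence is in the simplicity step: the paper obtains outerness of $\gamma$ directly from outerness of $\beta$ (Lemma~\ref{lemma}), whereas you deduce it from the factor property of $M$ in Proposition~\ref{P:V} by arguing that an inner $\gamma$ would make $\widetilde{\gamma}$ inner and force $M$ to have nontrivial center. Both arguments are valid; yours has the mild advantage of reusing Proposition~\ref{P:V}, which is already needed for the trace, rather than appealing to a separate fact about tensor products of automorphisms, while the paper's route is slightly more elementary in that it does not pass through the crossed-product von Neumann algebra.
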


\begin{proof}
It is obvious that $D$ is separable and unital.

The automorphism $\gm$ is outer,
because $\bt$ is outer (Lemma~\ref{lemma}).
Therefore $\gm^2 = \gm^{-1}$ is outer.
Simplicity of~$D$ now follows from Theorem~3.1 of~\cite{Ks1}.

Exactness follows from Proposition~7.1(v) of~\cite{Kr2}
and Lemma~\ref{L:Uniq}.

To prove that $D$ has a unique \tst,
apply Lemma~\ref{utrace},
using Lemmas \ref{L:Uniq}, \ref{L:B0P}, and~\ref{L:A0P},
as well as Proposition~\ref{P:V},
to verify its hypotheses.
\end{proof}

To reduce the nonisomorphism of the \ca\  $D$ and its opposite
to the nonisomorphism of the von Neumann algebra $M$ and its opposite,
we use the unique tracial state defined on $D$
and its associated Gelfand-Naimark-Segal representation.

\begin{theorem}\label{T:NotIsoOpp}
The \ca\  $D = C \rtimes_{\gamma} \gruppo$ of Definition~\ref{D:D}
is not isomorphic to its opposite algebra.
\end{theorem}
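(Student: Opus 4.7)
The plan is to reduce the nonisomorphism of $D$ and $D^{\mathrm{op}}$ to Proposition~\ref{P:V}, by exploiting the uniqueness of the tracial state on $D$ established in Proposition~\ref{P:UniqTrD}. Suppose toward contradiction that $\Phi \colon D \to D^{\mathrm{op}}$ is a $*$-isomorphism. The opposite algebra $D^{\mathrm{op}}$ has the same tracial states as $D$ (the trace identity $\tau(xy) = \tau(yx)$ is symmetric in $x$ and $y,$ so a linear functional is a trace on $D$ if and only if it is a trace on $D^{\mathrm{op}}$). Combined with Proposition~\ref{P:UniqTrD}, this forces $\Phi$ to be trace-preserving, that is, $\tau \circ \Phi = \tau.$

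Next I would pass from $\Phi$ to a von Neumann algebra isomorphism via the Gelfand-Naimark-Segal construction. Let $\pi_\tau \colon D \to B(L^2(D, \tau))$ be the GNS representation associated to~$\tau.$ Since $\Phi$ preserves the trace, it extends to a unitary on $L^2(D, \tau)$ implementing a normal $*$-isomorphism between the double commutants. The GNS representation of $D^{\mathrm{op}}$ associated to $\tau$ acts on the same Hilbert space, and its weak closure is canonically identified with $\big(\pi_\tau(D)''\big)^{\mathrm{op}}.$ Hence $\Phi$ induces a normal $*$-isomorphism
\[
\widetilde{\Phi} \colon \pi_\tau(D)''
 \xrightarrow{\;\cong\;} \big(\pi_\tau(D)''\big)^{\mathrm{op}}.
\]

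The central step is to identify $\pi_\tau(D)''$ with the factor $M = (N \,\overline{\otimes}\, R) \rtimes_{\widetilde{\gamma}} \gruppo$ of Proposition~\ref{P:V}. Since the unique trace on $D$ is $\tau = \tau_{C_0} \circ E$ for the canonical conditional expectation $E \colon D \to C_0,$ the GNS Hilbert space of $D$ is canonically $L^2(C_0, \tau_{C_0}) \otimes \ell^2(\gruppo),$ and the weak closure of the image of $C_0$ is the GNS von Neumann algebra of $(C_0, \tau_{C_0}).$ Using the tensor product decomposition $C_0 = A_0 \otimes B_0$ with tensor product trace, together with Lemmas \ref{L:A0P} and~\ref{L:B0P}, this latter algebra is $N \,\overline{\otimes}\, R,$ with $\widetilde{\gamma}$ the extension of~$\gm$ to this algebra. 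Standard results on crossed products by finite groups then give $\pi_\tau(D)'' \cong (N \,\overline{\otimes}\, R) \rtimes_{\widetilde{\gamma}} \gruppo = M.$ Combining with the preceding step yields $M \cong M^{\mathrm{op}},$ contradicting Proposition~\ref{P:V}.

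The main technical obstacle is verifying the identification $\pi_\tau(D)'' \cong M,$ which amounts to checking that both the tensor product and the crossed product constructions commute with GNS completion in the expected way. For the tensor product of C*-algebras with their natural traces this is classical; for the crossed product by the finite group $\gruppo$ one uses that $D$ acts on $L^2(C_0, \tau_{C_0}) \otimes \ell^2(\gruppo)$ in the standard covariant fashion, and this representation extends to $(N \,\overline{\otimes}\, R) \rtimes_{\widetilde{\gamma}} \gruppo$ by normality. Once this identification is in hand, the remainder is immediate from Proposition~\ref{P:V}.
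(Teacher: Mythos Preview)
Your argument is correct and follows essentially the same route as the paper: assume an isomorphism $\Phi\colon D\to D^{\mathrm{op}}$, use uniqueness of the trace to see that $\Phi$ is trace-preserving, pass to the GNS completions to obtain $\pi_\tau(D)''\cong(\pi_\tau(D)'')^{\mathrm{op}}$, identify $\pi_\tau(D)''$ with the factor $M$ of Proposition~\ref{P:V}, and derive a contradiction. The paper is briefer about the identification $\pi_\tau(D)''\cong M$ (it refers to the proof of Theorem~3.2 of~\cite{Phil}), whereas you spell out how the GNS construction interacts with the tensor product and finite-group crossed product, but the content is the same.
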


\begin{proof}
Assume that there exists an isomorphism
$\Phi \colon D \to D^{\mathrm{op}}.$
Let $\tau$ be the unique tracial state on $D$
(Proposition~\ref{P:UniqTrD}),
and denote by $\tau^{\mathrm{op}}$
the corresponding (unique) tracial state on $D^{\mathrm{op}}.$
Denote by $\pi$ and $\pi^{\mathrm{op}}$
the Gelfand-Naimark-Segal representations of $D$ and $D^{\mathrm{op}}$
associated to $\tau$ and $\tau^{\mathrm{op}}.$
By uniqueness of the tracial states,
we have $\tau  = \tau^{\mathrm{op}} \circ \Phi,$
so $\pi$ is unitarily equivalent to $\pi^{\mathrm{op}}\circ\Phi.$
It follows that as von Neumann algebras
$(\pi (D))^{\prime\prime}$
and $(\pi^{\mathrm{op}} (D^{\mathrm{op}}))^{\prime\prime}$
are isomorphic.
As in the proof of Theorem~3.2 of~\cite{Phil},
the algebra $(\pi (D))^{\prime\prime}$
is isomorphic to the factor $M$ of Proposition~\ref{P:V},
and
$(\pi^{\mathrm{op}} (D^{\mathrm{op}}))^{\prime\prime}
 \cong M^{\mathrm{op}}.$
So $M \cong M^{\mathrm{op}},$
contradicting Proposition~\ref{P:V}.
\end{proof}

\section{The Rokhlin property}\label{Sec:6}

We obtain further properties
of the \ca\  $D = C \rtimes_{\gamma} \gruppo$
by showing that $\gm$ has the Rokhlin property.

\begin{definition}\label{D:Rokhlin}
Let $E$ be a unital \ca\  and let $\sm \colon G \to \Aut (E)$
be an action of a finite group $G$ on~$E.$
We say that $\sigma$ has the {\emph{Rokhlin property}}
if for every finite set $F \subseteq E$ and every $\varepsilon > 0,$
there exist mutually orthogonal projections $e_g \in E$
for $g \in G$ such that:
\begin{enumerate}
\item\label{D:Rokhlin:1}
$\| \sigma_g (e_h) - e_{g h}\| < \varepsilon$ for all $g, h \in G.$
\item\label{D:Rokhlin:2}
$\|e_g a - a e_g\| < \varepsilon$ for all $g \in G$ and all $a \in F.$
\item\label{D:Rokhlin:3}
$\sum_{g \in G} e_g = 1.$
\end{enumerate}
\end{definition}

We first prove a Rokhlin-like property of order~$3$
for the automorphism $\bt$ of Lemma~\ref{lemma}.

\begin{lemma}\label{L:PartR}
Let $B$ be as in Definition~\ref{D:B0},
and let $\bt \in \Aut (B)$ be as in Lemma~\ref{lemma}.
Then for any $\varepsilon > 0$ and any finite set $F \subseteq B$
there exist projections $p_1,$ $p_2$ and $p_3$ such that:
\begin{enumerate}
\item\label{L:PartR:1}
$\| \beta (p_j) - p_{j + 1} \| < \varepsilon$ for $j = 1, 2,$
and
$\| \beta (p_3) - p_1\| < \varepsilon.$
\item\label{L:PartR:3}
$\|p_j a - a p_j\| < \varepsilon$
for all $a \in F$ and all $j \in \{1, 2, 3\}.$
\item\label{L:PartR:4}
$p_1 + p_2 + p_3 = 1.$
\end{enumerate}
\end{lemma}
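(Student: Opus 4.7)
My plan is to construct the three projections explicitly as spectral projections of the shifted unitary $\lambda^k(v)$ for a sufficiently large $k$.  The entire argument rests on the following extension of the identity $\beta(v) = e^{2\pi i/3} v$ from Lemma~\ref{lemma}:  for every $k \geq 0$,
\[
\beta\bigl(\lambda^k(v)\bigr) \;=\; e^{2\pi i/3}\, \lambda^k(v).
\]
Granted this, put $q_i = \pi_{k+1}(e_{i,i})$ for $i=1,2,3$; these sum to $\pi_{k+1}(1)=1$ and are the minimal spectral projections of $\lambda^k(v) = e^{2\pi i/3} q_1 + e^{4 \pi i/3} q_2 + q_3$.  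Since the spectrum is simple, matching eigenvalues on the two sides of the identity above forces $\beta(q_1) = q_3$, $\beta(q_2) = q_1$, $\beta(q_3) = q_2$.  Relabeling $p_1 = q_1$, $p_2 = q_3$, $p_3 = q_2$ then gives the required cyclic permutation under $\beta$ \emph{exactly}, not merely approximately.

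To verify the displayed identity, write $\beta(y) = \lim_n \Ad(V_n)(y)$ with $V_n = u \lambda(u) \cdots \lambda^n(u)$, and keep track of which factors $\lambda^j(u)$ actually touch tensor position $k+1$ where $\lambda^k(v) = \pi_{k+1}(e^{2\pi i/3} e_{1,1} + e^{4\pi i/3} e_{2,2} + e_{3,3})$ sits.  Each $\lambda^j(u)$ is supported in positions $j{+}1$ and $j{+}2$, so the factors with $j \geq k+1$ commute with $\lambda^k(v)$ and the limit stabilizes at $n = k$; the factors with $j \leq k-2$ lie in positions $\leq k$ and also commute.  The only delicate factor is
\[
\lambda^{k-1}(u) \;=\; \pi_k(e_{3,1})\, \pi_{k+1}\!\bigl(e^{-2\pi i/3} e_{1,1} + e^{-4\pi i/3} e_{2,2} + e_{3,3}\bigr) + \pi_k(e_{1,2}) + \pi_k(e_{2,3}),
\]
which meets position $k+1$ only through a power of the same fixed diagonal matrix that defines $\lambda^k(v)$; diagonal matrices commute, so this factor also commutes with $\lambda^k(v)$, and hence $V_{k-1}$ commutes with $\lambda^k(v)$.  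The computation therefore collapses to
\[
\beta\bigl(\lambda^k(v)\bigr) \;=\; \Ad\bigl(\lambda^k(u)\bigr)\bigl(\lambda^k(v)\bigr) \;=\; \lambda^k\bigl(\Ad(u)(v)\bigr) \;=\; \lambda^k\bigl(e^{2\pi i/3} v\bigr) \;=\; e^{2\pi i/3}\, \lambda^k(v),
\]
using $\Ad(u)(v) = e^{2\pi i/3} v$ from Lemma~\ref{lemma}.

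The approximate commutation with $F$ is then routine: choose $k$ so large that every element of $F$ lies within $\varepsilon/2$ of $B_0^{(k)}$, note that each $p_i$ lies in $\pi_{k+1}(\matrices)$ and therefore commutes exactly with every element of $B_0^{(k)}$, and apply a two-term triangle estimate to conclude $\|p_i a - a p_i\| < \varepsilon$ for every $a \in F$.  The only real obstacle is the tensor-position bookkeeping needed to prove $\beta(\lambda^k(v)) = e^{2\pi i/3} \lambda^k(v)$; but this reduces to the triviality that any two powers of the same diagonal matrix commute, and beyond that everything is strict equality.
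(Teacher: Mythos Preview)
Your proof is correct and is essentially the paper's own argument: both use the projections $\pi_{k+1}(e_{1,1}),\pi_{k+1}(e_{3,3}),\pi_{k+1}(e_{2,2})$ (in that cyclic order), both isolate $\lambda^{k-1}(u)$ and $\lambda^k(u)$ as the only relevant factors in the limit defining~$\beta$, both observe that $\lambda^{k-1}(u)$ commutes with these projections because its contribution in position $k{+}1$ is diagonal, and both obtain the cyclic permutation \emph{exactly}. The only cosmetic difference is that you package the calculation via the single identity $\beta(\lambda^k(v))=e^{2\pi i/3}\lambda^k(v)$ and then read off the permutation of spectral projections, whereas the paper computes $\Ad(\lambda^k(u))$ on each $\pi_{k+1}(e_{j,j})$ separately; and you handle general finite $F$ by an $\varepsilon/2$ approximation, whereas the paper first reduces to $F\subseteq B_0^{(n)}$.
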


\begin{proof}
Adopt the notation of Notation~\ref{N:B0} and Lemma~\ref{lemma}.
Since $B = \dirlim B_n,$
we need only consider finite subsets $F$ such that there is $n$
with $F \subseteq B_n.$
Set
\[
p_1 = \pi_{n + 1} (e_{1, 1}),
\,\,\,\,\,\,
p_2 = \pi_{n + 1} (e_{3, 3}),
\andeqn
p_3 = \pi_{n + 1} (e_{2, 2}).
\]
Obviously $p_1 + p_2 + p_3 = 1.$
Since $p_1,$ $p_2,$ and $p_3$ commute with every element in $F,$
condition~(\ref{L:PartR:3}) is verified.
To check condition~(\ref{L:PartR:1}),
we need to compute $\beta (p_j)$ for $j \in \{ 1, 2, 3 \}.$
Note that if $0 \leq k \leq n - 2$ or $k \geq n + 1,$
then $\lambda^k (u)$ commutes with $p_j$ for $j \in \{1, 2, 3\}.$
Set $v_0 = e^{2 \pi i/3} e_{1,1} + e^{4 \pi i/3} e_{2, 2} + e_{3, 3},$
so that $v = \pi_1 (v_0).$
We have
\begin{align*}
\Ad (\lambda^{n} (u)) (p_1)
&
 = \lambda^{n} \big( (e_{3, 1} \otimes v_0^* \otimes 1 \otimes \cdots)
 (e_{1, 1} \otimes 1 \otimes \cdots)
 (e_{3, 1} \otimes v_0^* \otimes 1 \otimes \cdots)^* \big)
= p_2, \\
\Ad (\lambda^{n} (u)) (p_2)
&
 = \lambda^{n} \big( (e_{2, 3} \otimes 1 \otimes 1 \otimes \cdots)
 (e_{3, 3} \otimes 1 \otimes \cdots)
 (e_{2, 3} \otimes 1 \otimes 1 \otimes \cdots)^* \big)
= p_3, \\
\Ad (\lambda^{n} (u)) (p_3)
&
 = \lambda^{n} \big( (e_{1, 2} \otimes 1 \otimes 1 \otimes \cdots)
 (e_{2, 2} \otimes 1 \otimes \cdots)
 (e_{1, 2} \otimes 1 \otimes 1 \otimes \cdots)^* \big)
= p_1.
\end{align*}
Also,
\[
\ld^{n - 1} (u) = \pi_n (e_{3, 1}) \pi_{n + 1} (v_0^*)
                     + \pi_n (e_{1, 2}) + \pi_n (e_{2, 3}),
\]
and $v_0^*$ commutes with $e_{1, 1},$ $e_{2, 2},$ and $e_{3, 3},$
so $\ld^{n - 1} (u)$ commutes with $p_1,$ $p_2,$ and~$p_3.$
Therefore
\begin{align*}
\beta (p_1) & = \Ad (\lambda^{n-1} (u)\lambda^n (u)) (p_1) = p_2, \\
\beta (p_2) & = \Ad (\lambda^{n-1} (u)\lambda^n (u)) (p_2) = p_3, \\
\beta (p_3) & = \Ad (\lambda^{n-1} (u)\lambda^n (u)) (p_3) = p_1.
\end{align*}
So $p_1,$ $p_2,$ and $p_3$ verify condition~(\ref{L:PartR:1}).
\end{proof}

\begin{proposition}\label{rokhlin}
The action $\gm \colon \gruppo \to \Aut (C)$ of Definition~\ref{D:C0}
has the Rokhlin property.
\end{proposition}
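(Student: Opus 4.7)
The plan is to lift the Rokhlin--type projections provided by Lemma~\ref{L:PartR} from $B_0$ to $C_0 = A_0 \otimes B_0$ by means of the map $p \mapsto 1 \otimes p,$ and to show that, because of the specific location of these projections inside the infinite tensor product, the extra inner perturbation $\Ad(w)$ that defines $\gamma$ has no effect on them.

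The key structural observation is that $w \in C^*(u_0 \otimes v)$ is a norm limit of polynomials in $u_0 \otimes v$ and $(u_0 \otimes v)^*,$ hence lies in the subalgebra $A_0 \otimes B_0^{(1)}$ of $C_0,$ since $u_0 \in A_0$ and $v \in B_0^{(1)}$ is supported in position~$1$ of the infinite tensor product. The projections produced by Lemma~\ref{L:PartR}, on the other hand, have the explicit form $\pi_{n+1}(e_{i,i}),$ supported in position $n+1 \geq 2.$ As elements in disjoint tensor factors, $v$ and any such $\pi_{n+1}(e_{i,i})$ commute, and so $w$ commutes exactly with $1 \otimes p_j$ for every such $p_j.$

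Given a finite set $F \subseteq C_0$ and $\varepsilon > 0,$ I would proceed as follows. First, using density of $\bigcup_n A_0 \otimes B_0^{(n)}$ in $C_0,$ choose $n$ and, for each $a \in F,$ an element $a' \in A_0 \otimes B_0^{(n)}$ (a finite sum of elementary tensors $\sum_j a_j \otimes b_j$ with $b_j \in B_0^{(n)}$) such that $\|a - a'\| < \varepsilon/2.$ Next, apply Lemma~\ref{L:PartR} to the finite subset $\{b_j\} \subseteq B_0^{(n)}$ with parameter $\varepsilon$ to obtain projections $p_1, p_2, p_3 \in B_0^{(n+1)},$ supported in position $n+1,$ with $p_1 + p_2 + p_3 = 1$ and $\beta(p_i) = p_{i+1}$ (the proof of Lemma~\ref{L:PartR} actually yields exact equality here, since $p_i$ commutes with $\lambda^k(u)$ for all $k \notin \{n-1, n\}$). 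Finally, define
\[
e_g = 1 \otimes p_g \in C_0 \qquad (g \in \gruppo).
\]

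The three Rokhlin conditions of Definition~\ref{D:Rokhlin} are then verified directly. The sum $\sum_g e_g = 1$ is immediate. For near--commutation with $F,$ note that each approximation $a' = \sum_j a_j \otimes b_j$ commutes exactly with $e_g,$ because $b_j \in B_0^{(n)}$ sits in positions $1, \ldots, n$ while $p_g$ sits in position $n+1$; hence $\|e_g a - a e_g\| \leq 2\|a - a'\| < \varepsilon.$ For the cyclicity condition, using $\alpha(1) = 1$ and the fact that $w$ commutes with $1 \otimes p_j,$
\[
\gamma(e_i) \;=\; \Ad(w)\!\bigl((\alpha \otimes \beta)(1 \otimes p_i)\bigr) \;=\; \Ad(w)(1 \otimes \beta(p_i)) \;=\; 1 \otimes p_{i+1} \;=\; e_{i+1}.
\]

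The only mildly delicate step is the verification that $w$ commutes with $1 \otimes p_j,$ which is where the careful choice of the location of the projections in the $(n+1)$-th tensor factor is essential; everything else is bookkeeping built on Lemma~\ref{L:PartR}. No genuine obstacle is expected, since both the Rokhlin--like behavior of $\beta$ on $B_0$ and the tensor--factor placement of $v$ have already been arranged in the definition of $C_0$ and of $w.$
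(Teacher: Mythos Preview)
Your proof is correct, and the essential idea---lift the Rokhlin projections from $B_0$ to $C_0$ via $p \mapsto 1 \otimes p$---is the same as the paper's. The one genuine difference is in how you handle the inner perturbation~$\Ad(w)$. The paper simply adds $w$ to the finite set~$F$ and uses the approximate commutation $\|e_j w - w e_j\| < \varepsilon/2$ guaranteed by the Rokhlin conditions, obtaining $\|\gamma(e_j) - e_{j+1}\| < \varepsilon$ by a two-term estimate. You instead exploit the tensor-factor location of~$w$: since $v \in B_0^{(1)}$ and $w \in C^*(u_0 \otimes v) \subseteq A_0 \otimes B_0^{(1)}$, while the projections $p_j = \pi_{n+1}(e_{i,i})$ live in position $n+1$, you get exact commutation $w(1 \otimes p_j) = (1 \otimes p_j)w$ and hence $\gamma(e_i) = e_{i+1}$ on the nose. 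Your route is slightly sharper but depends on the explicit form of the projections produced in the \emph{proof} of Lemma~\ref{L:PartR}, not merely its statement; the paper's argument, by contrast, uses only the statement of Lemma~\ref{L:PartR} and would go through for any choice of Rokhlin-type projections for~$\beta$.
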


\begin{proof}
Recall that $\gm$ is generated by the automorphism
(also called $\gm$) $\Ad (w) \circ (\af \otimes \bt)$
of $C = A \otimes B.$

We first claim that for any finite subset $F \subseteq A \otimes B$
and any $\ep > 0,$
there are \pj s $e_1, e_2, e_3 \in A \otimes B$ such that:
\begin{enumerate}
\item\label{Pf:PartR:1}
$\| (\af \otimes \bt) (e_j) - e_{j + 1} \| < \varepsilon$
for $j = 1, 2,$
and
$\| (\af \otimes \bt) (e_3) - e_1 \| < \varepsilon.$
\item\label{Pf:PartR:2}
$\|e_j a - a e_j\| < \varepsilon$
for all $a \in F$ and all $j \in \{1, 2, 3\}.$
\item\label{Pf:PartR:3}
$e_1 + e_2 + e_3 = 1.$
\end{enumerate}
A standard approximation argument
shows that it suffices to choose a subset $G \subseteq A \otimes B$
which generates a dense *-subalgebra of~$A \otimes B,$
and consider only finite subsets $F \subseteq G.$
Thus, we may assume that there are finite subsets $S \subseteq A$
and $T \subseteq B$ such that
\[
F = \big\{ a \otimes b \colon {\mbox{$a \in S$ and $b \in T$}} \big\}.
\]
Set $M = \sup \{ \| a \| \colon a \in S \}.$
Apply Lemma~\ref{L:PartR} with $\ep / (M + 1)$ in place of~$\ep$
and with $T$ in place of~$F,$
obtaining \pj s $p_1, p_2, p_3 \in B$ as there.
Set $e_j = 1 \otimes p_j$ for $j \in \{ 1, 2, 3 \}.$
Conditions (\ref{Pf:PartR:1}) and~(\ref{Pf:PartR:3}) above
are immediate.
For~(\ref{Pf:PartR:2}),
let $a \in S$ and $b \in T,$ and let $j \in \{ 1, 2, 3 \}.$
Then
\[
\| (1 \otimes p_j) (a \otimes b) - (a \otimes b) (1\otimes p_j)\|
 = \| a \| \| p_j b - b p_j \|
 \leq M \cdot \ep / (M + 1)
 < \ep.
\]
This proves the claim.

To show that the action $\gamma = \Ad (w) \circ (\alpha \otimes \beta)$
has the Rokhlin property,
let $F \subseteq C$ be finite and let $\ep > 0.$
It suffices to find \pj s $e_1, e_2, e_3 \in C$ satisfying
Conditions (\ref{Pf:PartR:1})--(\ref{Pf:PartR:3}) in the claim above
with $\gm$ in place of $\af \otimes \bt.$
Choose $e_1, e_2, e_3 \in C$ as in the claim
with $F \cup \{ w \}$ in place of $F$
and $\tfrac{1}{2} \ep$ in place of $\ep.$
The analogs
of Conditions (\ref{Pf:PartR:2}) and~(\ref{Pf:PartR:3}) above
are immediate.
For~(\ref{Pf:PartR:1}),
estimate
\begin{align*}
\| \gamma (e_1) - e_2 \|
& = \|(\alpha \otimes \beta ) (e_1) - w^* e_2 w \|
 \leq \| (\alpha \otimes \beta) (e_1) - e_2 \|
         + \|e_2 - w^* e_2 w \|
 \\
& = \|(\alpha \otimes \beta) (e_1) - e_2 \| + \| w e_2 - e_2 w \|
 < \tfrac{1}{2} \ep + \tfrac{1}{2} \ep
 = \varepsilon.
\end{align*}
Similarly
$\| \gamma (e_2) - e_3 \| < \ep$ and $\| \gamma (e_3) - e_1 \| < \ep.$
\end{proof}


Recall the Jiang-Su algebra~$Z,$
from Theorem~2.9 of~\cite{JS}.
It is a simple separable unital nuclear \ca,
not of type~${\mathrm{I}},$
which has a unique tracial state
and which satisfies $K_0 (Z) \cong \Z$ and $K_1 (Z) = 0.$

\begin{proposition}\label{C:PropOfD}
The \ca\  $D = C \rtimes_{\gamma} \gruppo$
of Definition~\ref{D:D}
is approximately divisible, stably finite,
and has real rank zero and stable rank one.
The order on projections over $D$ is determined
by the unique tracial state on~$D,$
in the sense of Definition~\ref{D:OrdDetTr}.
Moreover, $D$ tensorially absorbs the $3^{\infty}$~UHF algebra~$B$
and the Jiang-Su algebra~$Z.$
\end{proposition}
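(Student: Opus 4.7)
The plan is to establish each property first for the coefficient algebra $C_0 = A_0 \otimes B_0$, and then transfer it to $D = C_0 \rtimes_{\gm} \gruppo$ using the Rokhlin property of $\gm$ (Proposition~\ref{rokhlin}), invoking the known fact that each of approximate divisibility, stable rank one, and real rank zero is preserved under crossed products by finite group actions with the Rokhlin property (as developed in the Osaka-Phillips program). For $C_0$, approximate divisibility is a direct application of Lemma~\ref{L:AppDivAlgs}: $B_0$ is an infinite-dimensional simple unital AF~algebra, hence approximately divisible, and so $A_0 \otimes B_0$ is as well. Stable finiteness of both $C_0$ and $D$ is immediate from existence of a faithful tracial state (Lemma~\ref{L:Uniq} and Proposition~\ref{P:UniqTrD}). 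For stable rank one and real rank zero of $C_0$, I will use that $B_0$ is strongly self-absorbing, so that $C_0 \otimes B_0 \cong A_0 \otimes B_0 \otimes B_0 \cong C_0$; thus $C_0$ is $B_0$-stable and in particular $\mathcal{Z}$-stable. Combined with simplicity, unitality, exactness (Lemma~\ref{L:Uniq}), and stable finiteness, R{\o}rdam's theorem for stably finite $\mathcal{Z}$-stable algebras gives stable rank one, while approximate divisibility together with density of the projection-trace values (the matrix units in $B_0$ give values in $\Z[\tfrac{1}{3}] \cap [0,1]$, dense in $[0,1]$) yields real rank zero.

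Once these four properties have been transferred to $D$, the statement that the order on projections over $D$ is determined by traces is just Proposition~\ref{order} applied to $D$, which is now known to be finite, approximately divisible, simple, separable, unital, and exact; uniqueness of the trace (Proposition~\ref{P:UniqTrD}) reduces comparison to the single tracial state.

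For the absorption statements, I begin with the identification
\[
D \otimes B_0 \;\cong\; (C_0 \otimes B_0) \rtimes_{\gm \otimes \id} \gruppo.
\]
Since $C_0 \otimes B_0 \cong C_0$ as C*-algebras, the task is to upgrade this to a $\gruppo$-equivariant isomorphism of C*-dynamical systems, so as to identify the right hand side with $C_0 \rtimes_{\gm} \gruppo = D$. The Rokhlin property of~$\gm$, together with the fact that $\gm$ already restricts to $\id \otimes \bt$ on the distinguished UHF tensor factor $B_0 \subseteq C_0$, provides such an equivariant identification via a standard cocycle-conjugacy argument for Rokhlin actions combined with the self-absorbing structure of~$B_0$. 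This gives $D \otimes B_0 \cong D$. Absorption of the Jiang-Su algebra then follows from $Z \otimes B_0 \cong B_0$ (every UHF algebra of infinite type is $Z$-stable), via $D \otimes Z \cong D \otimes B_0 \otimes Z \cong D \otimes B_0 \cong D$.

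The main obstacle in the argument is the equivariance step in the last paragraph: while $C_0 \otimes B_0 \cong C_0$ is essentially formal, promoting this to a $\gruppo$-equivariant statement is where the Rokhlin property must be used in an essential, non-formal way, and where care is needed to ensure the constructed cocycle conjugacy intertwines $\gm \otimes \id$ with $\gm$.
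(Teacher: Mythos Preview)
Your argument for approximate divisibility, stable finiteness, and the order on projections is essentially identical to the paper's. For stable rank one and real rank zero of $C_0$ you take a slightly different route---via $\mathcal{Z}$-stability of $C_0$ and R{\o}rdam's structure theorems for $\mathcal{Z}$-stable algebras---whereas the paper applies R{\o}rdam's earlier results on tensoring with UHF algebras directly (Corollary~6.6 of~\cite{Rordam1} for stable rank one, Theorem~7.2 of~\cite{Rordam2} for real rank zero). Both routes are valid and then feed into the same Osaka--Phillips Rokhlin permanence results.

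The one place your proposal goes off track is the absorption step. Your claim that ``$\gm$ already restricts to $\id \otimes \bt$ on the distinguished UHF tensor factor $B_0 \subseteq C_0$'' is false: $\gm = \Ad(w) \circ (\af \otimes \bt)$ with $w \in C^*(u_0 \otimes v)$, and since $v$ does not commute with all of $B_0$, neither does $w$ commute with $1 \otimes B_0$. More to the point, this claim is unnecessary. The paper avoids the equivariance problem entirely by citing Corollary~3.4(1) of Hirshberg--Winter~\cite{HW}: if $A$ absorbs a strongly self-absorbing algebra~$\mathcal{D}$ and the finite group action has the Rokhlin property, then $A \rtimes G$ absorbs~$\mathcal{D}$. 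Since $C_0 \otimes B_0 \cong C_0$ (non-equivariantly) and $\gm$ has the Rokhlin property, this gives $D \otimes B_0 \cong D$ immediately. The ``standard cocycle-conjugacy argument'' you allude to is precisely what \cite{HW} packages; you should invoke it rather than attempt to redo it by hand with an incorrect auxiliary observation. Absorption of~$Z$ then follows exactly as you say.
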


\begin{proof}
The \ca\  $D$ is stably finite since it is simple and has
a (necessarily faithful) \tst.

The same applies to~$A.$
Since $B$ is a UHF~algebra,
Corollary~6.6 in~\cite{Rordam1}
implies that ${\operatorname{tsr}} (A \otimes B) = 1.$
Since $\gm$ has the Rokhlin property,
Proposition~4.1(1) in~\cite{OsaPhil} now implies that
${\operatorname{tsr}} (D) = 1.$

Since any quasitrace
on a unital exact \ca\  is a tracial state~(\cite{Haa}),
Lemma~\ref{L:A0} implies that the \pj s in $A$ distinguish
the quasitraces.
Theorem~7.2 in~\cite{Rordam2}
now implies that $A \otimes B$ has real rank zero,
and Proposition~4.1(2) in~\cite{OsaPhil}
implies that $D$ has real rank zero.

Lemma~\ref{L:AppDivAlgs}(\ref{L:AppDivAlgs:1})
implies that $B$ is approximately divisible,
so Lemma~\ref{L:AppDivAlgs}(\ref{L:AppDivAlgs:2})
implies that $A \otimes B$ is approximately divisible,
and Proposition~4.5 in~\cite{OsaPhil}
(or Corollary~3.4(2) in~\cite{HW})
implies that $D$ is approximately divisible.

Proposition~\ref{order} now implies that
the order on projections over $D$ is determined by traces.

For the absorption properties,
first note that
$C \otimes B = A \otimes B \otimes B \cong A \otimes B.$
Thus $C$ absorbs~$B.$
It now follows from Corollary 3.4(1) of~\cite{HW}
that $D$ absorbs~$B.$
Now $D$ absorbs $Z$ because,
by Corollary~6.3 of~\cite{JS}, the algebra $B$ absorbs~$Z.$
\end{proof}

\section{The K-theory of the crossed
 product $D = C \rtimes_{\gamma} \gruppo$}\label{Sec:7}

In this section, we compute the K-theory of the algebra
\[
D = C \rtimes_{\gamma} \gruppo
 = (A \otimes B) \rtimes_{\Ad (w) \circ (\af \otimes \bt)} \gruppo.
\]
We start with the K-theory of $A.$

\begin{lemma}\label{KA}
Let $A$ be as in Definition~\ref{D:A0}.
Then the inclusion $\et \colon \C^3 \to A$ of the last free factor
is a KK-equivalence.
In particular,
$K_1 (A) = 0$ and $K_0 (A) \cong {\mathbb{Z}}^3,$
generated by the classes of the images of the \pj s
\[
r_1 = (1, 0, 0), \,\,\,\,\,\,
r_2 = (0, 1, 0),
\andeqn
r_3 = (0, 0, 1)
\]
in~$\C^3.$
Moreover, $K_0 (\af)$ is given by permuting the coordinates:
$(\et_1, \et_2, \et_3) \mapsto (\et_3, \et_1, \et_2).$
\end{lemma}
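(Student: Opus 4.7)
The plan is to prove the stronger statement that $\eta \colon \C^3 \hookrightarrow A_0$ is a KK-equivalence; the two K-theory statements then follow at once by applying $K_0$ and $K_1$, since $K_0(\C^3) = \Z^3$ is freely generated by $[r_1], [r_2], [r_3]$ and $K_1(\C^3) = 0$.

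The strategy is to replace each of the three $C([0,1])$ free factors in $A_0$ by $\C$, exploiting the fact that $C([0,1])$ is homotopy equivalent, and in particular KK-equivalent, to $\C$. Concretely, the unital inclusion $\C \hookrightarrow C([0,1])$ as constant functions is a KK-equivalence whose homotopy inverse is evaluation at a point, and it is state-preserving, since the canonical state on $\C$ agrees with the restriction of Lebesgue measure $\mu$ to the constants; both states are faithful and hence GNS-faithful.

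Next I would invoke the KK-functoriality of the reduced amalgamated free product construction, as developed by Germain in his work on KK-theory of reduced free products of \ca s: if $B_i \hookrightarrow A_i$ are unital, state-preserving inclusions which are KK-equivalences, with the states GNS-faithful, then the induced map
\[
B_1 \star_{\mathrm{r}} B_2 \star_{\mathrm{r}} \cdots \star_{\mathrm{r}} B_n
 \longrightarrow
A_1 \star_{\mathrm{r}} A_2 \star_{\mathrm{r}} \cdots \star_{\mathrm{r}} A_n
\]
is again a KK-equivalence. Applying this three times, once for each $C([0,1])$ factor, yields that the canonical map
\[
\C \star_{\mathrm{r}} \C \star_{\mathrm{r}} \C \star_{\mathrm{r}} \C^3
 \longrightarrow A_0
\]
is a KK-equivalence. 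Since the reduced amalgamated free product of the base algebra $\C$ with any unital \ca\ collapses to that algebra, the left-hand side is simply $\C^3$ and the induced map is precisely $\eta$. Taking $K_0$ and $K_1$ then delivers the claimed computation.

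The main obstacle, and essentially the only real content beyond bookkeeping, is invoking the correct KK-equivalence theorem for reduced amalgamated free products and verifying that its hypotheses hold in our setting (unital inclusions over the base $\C$, faithful tracial states on every factor, compatibility of the states with the inclusions). Once that black box is established, the rest of the argument is purely formal, and the identification of the generators of $K_0(A_0)$ with $\eta_*[r_1], \eta_*[r_2], \eta_*[r_3]$ is automatic from the naturality of the KK-equivalence.
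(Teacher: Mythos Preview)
Your proposal is correct and rests on the same ingredient as the paper, namely Germain's KK-results for reduced free products, but the paper organizes the argument slightly differently. The paper first notes that $\C \hookrightarrow C([0,1])$ is a homotopy equivalence, so by the universal property of the \emph{full} free product the map
\[
\C^3 = \C \star \C^3 \longrightarrow C([0,1]) \star C([0,1]) \star C([0,1]) \star \C^3
\]
is a homotopy equivalence; it then invokes Germain's Theorem~4.1, which says that the quotient from the full free product to the reduced free product $A_0$ is a KK-equivalence. You instead package both steps into a single ``KK-functoriality of reduced free products'' statement. That statement is true and follows from Germain's theorem via the obvious commutative square with full free products on top and reduced ones on the bottom, but Germain does not state it in quite that form; the paper's route through the full free product matches what can be cited directly, whereas your formulation needs a line of justification to reduce to Germain's actual theorem.
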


\begin{proof}
First, the inclusion $\C \to C ([0, 1])$ is clearly a homotopy
equivalence.
By taking the full free product of homotopies,
we find that
\[
\C = \C \star \C \star \C
 \to C ([0, 1]) \star C ([0, 1]) \star C ([0, 1])
\]
(full free products)
is a homotopy equivalence.
Therefore
\[
\C^3 = \C \star \C^3
    \to C ([0, 1]) \star C ([0, 1]) \star C ([0, 1]) \star \C^3
\]
is a homotopy equivalence.
Theorem~4.1 of of~\cite{Germain}
(see the introduction to that paper for the notation $A$ and $A_r$
used there)
implies that
\[
C ([0, 1]) \star C ([0, 1]) \star C ([0, 1]) \star \C^3
 \to C ([0, 1]) \star_{\mathrm{r}} C ([0, 1])
    \star_{\mathrm{r}} C ([0, 1]) \star_{\mathrm{r}} \C^3
 = A
\]
is a KK-equivalence.
(Note that ``K-equivalence'' in~\cite{Germain} is what is usually
called KK-equivalence; see Section~6 of~\cite{Sk}.)

To compute $K_0 (\af),$
observe that the unitary $u_0$ of Definition~\ref{D:A0} is given by
$u_0 = \first r_1 + r_2 + \third r_3.$
Since it generates $\C^3$ and
\[
\af (u_0) = \third u_0 = r_1 + \third r_2 + \first r_3,
\]
we must have
$\af (r_1) = r_3,$ $\af (r_2) = r_1,$ and $\af (r_3) = r_2.$
The desired formula is now immediate.
\end{proof}

\begin{proposition}\label{P:7.4}
Let $C = A \otimes B$ be as in Definition~\ref{D:C0},
and let $D = C \rtimes_{\gamma} \gruppo$
be as in Definition~\ref{D:D}.
Then
\[
K_0 (D) \cong {\mathbb{Z}} \big[ \tfrac{1}{3} \big]
\andeqn
K_1 (D) = 0.
\]
The first isomorphism sends $[1]$ to~$1,$
and is an isomorphism of ordered groups.
Letting $\ta$ be the \tst\  on $D$ as in Proposition~\ref{P:UniqTrD},
the map $\ta_* \colon K_0 (D) \to \R$ corresponds to the
inclusion of ${\mathbb{Z}} \big[ \tfrac{1}{3} \big]$ in~$\R.$
\end{proposition}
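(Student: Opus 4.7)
The overall strategy is to exploit the Rokhlin property of $\gm$ (Proposition~\ref{rokhlin}) together with the results of Izumi~\cite{Izumi}, as anticipated in the introduction. For a Rokhlin action of a finite group, the crossed product is Morita equivalent to the fixed point algebra, so $K_*(D) \cong K_*(C_0^{\gm})$. Because $|\gruppo|=3$ is already invertible in $K_0(C_0)\cong \Z\big[\tfrac{1}{3}\big]^3$ (Corollary~\ref{C:KC0}), one expects the averaging projection $\tfrac{1}{3}(1+\gm_*+\gm_*^2)$ to identify $K_*(C_0^{\gm})$ with the fixed subgroup $K_*(C_0)^{\gm_*}$. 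I would therefore aim to prove the K-theoretic reduction
\[
K_*(D) \;\cong\; K_*(C_0)^{\gm_*},
\]
citing the relevant portion of~\cite{Izumi} for the Rokhlin machinery.

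Next I would compute the action $\gm_*$ on $K_0(C_0)$. Since inner automorphisms are trivial on K-theory, $\gm_*=(\af\otimes\bt)_*=\af_*\otimes\bt_*$. The map $\bt_*$ is the identity on $K_0(B_0)\cong \Z\big[\tfrac{1}{3}\big]$ because every automorphism of a UHF~algebra fixes the class of the unit and the group is generated by it. For $\af_*$, Corollary~\ref{C:KC0} gives a basis $[r_1],[r_2],[r_3]$ of $K_0(A_0)\cong \Z^3$. The relation $\af(u_0)=\third u_0$ forces $\af$ to cyclically permute the three spectral projections of $u_0$; a direct calculation identifies which cyclic permutation occurs, but either way $\af_*$ is a generator of the cyclic permutation of coordinates on $\Z^3$. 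Under the identification $K_0(C_0)\cong \Z\big[\tfrac{1}{3}\big]^3$ from Corollary~\ref{C:KC0}, the map $\gm_*$ is therefore the cyclic shift $(a,b,c)\mapsto (c,a,b)$, whose fixed subgroup is the diagonal $\{(a,a,a)\}\cong \Z\big[\tfrac{1}{3}\big]$. Combined with the reduction in the previous paragraph and with $K_1(C_0)=0$, this yields $K_0(D)\cong\Z\big[\tfrac{1}{3}\big]$ and $K_1(D)=0$. Under this isomorphism, $[1_D]$ corresponds to $[r_1]+[r_2]+[r_3]\leftrightarrow (1,1,1)\mapsto 1$.

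Finally, for the ordered group statement and the identification of $\ta_*$, I would use Proposition~\ref{C:PropOfD}: the order on $K_0(D)$ is determined by the unique \tst~$\ta$, so it suffices to identify $\ta_*\colon K_0(D)\to \R$ with the inclusion of $\Z\big[\tfrac{1}{3}\big]$ in $\R$. The homomorphism $\ta_*$ sends $[1_D]$ to~$1$, and since $D$ absorbs the $3^{\infty}$~UHF algebra~$B_0$ (again Proposition~\ref{C:PropOfD}), the image of $\ta_*$ contains $\Z\big[\tfrac{1}{3}\big]$; as $K_0(D)$ itself is isomorphic to $\Z\big[\tfrac{1}{3}\big]$, the map $\ta_*$ must be an isomorphism onto $\Z\big[\tfrac{1}{3}\big]\subset \R$. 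The main obstacle is the first paragraph: correctly invoking Izumi's results to reduce K-theory of the crossed product to the invariants of $\gm_*$, since naive finite-group Pimsner--Voiculescu sequences are not automatic; once that reduction is in hand, the remaining computations are essentially formal.
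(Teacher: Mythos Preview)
Your plan follows the paper's proof quite closely: the paper also invokes Izumi's Theorem~3.13 to identify $K_*(C_0^{\gm})$ with $\bigcap_m \ker(\id - K_*(\gm^m)) = \ker(\id - K_*(\gm))$ inside $K_*(C_0)$, computes $K_0(\gm)$ as the cyclic permutation of the coordinates of $\Z\big[\tfrac{1}{3}\big]^3$ (via $\af(u_0)=\third u_0$ and triviality of $\Ad(w)_*$, $\bt_*$), and then uses the Rosenberg corner $C_0^{\gm}\cong p D p$ with $p=\tfrac{1}{3}(1+z+z^2)$ to transport this to $K_*(D)$.

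The one place you gloss over something the paper treats carefully is the tracking of $[1_D]$. The Morita equivalence you cite is the corner inclusion $p D p \hookrightarrow D$, and under the resulting isomorphism it is $[p]$, not $[1_D]$, that corresponds to $[1_{C_0^{\gm}}]=(1,1,1)$. The paper closes this gap with a short dual-action argument: since $p+\widehat{\gm}(p)+\widehat{\gm}^2(p)=1$ and $K_0(\widehat{\gm})$ must be the identity on $\Z\big[\tfrac{1}{3}\big]$ (it fixes $[1]\neq 0$), one gets $[1_D]=3[p]$; then multiplication by $3$ is an automorphism of $\Z\big[\tfrac{1}{3}\big]$, so one can renormalize to send $[1_D]\mapsto 1$. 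Your assertion ``$[1_D]\leftrightarrow (1,1,1)\mapsto 1$'' is off by this factor of~$3$ and needs exactly this extra step. For $\ta_*$, the paper's argument is simpler than yours: there is a \emph{unique} group homomorphism $\Z\big[\tfrac{1}{3}\big]\to\R$ sending $1$ to~$1$, namely the inclusion, so no absorption argument is needed.
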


\begin{proof}
We use the notation of Definition~\ref{D:B0}, Lemma~\ref{lemma},
Definition~\ref{D:A0}, Lemma~\ref{L:af},
and Definition~\ref{D:C0}.

We begin by computing $K_* (C)$ and $K_* (\gm).$
We apply the K\"{u}nneth formula~\cite{Sc2};
this is valid because $B$ is in the bootstrap category of~\cite{Sc2}.
Since $K_0 (B) \cong {\mathbb{Z}} \big[ \tfrac{1}{3} \big],$
with $[1]$ being sent to~$1,$
and
$K_1 (B) = 0,$
Lemma~\ref{KA} gives
\[
K_1 (C) = 0
\andeqn
K_0 (C) \cong {\mathbb{Z}} \big[ \tfrac{1}{3} \big]^3.
\]
The inclusion $\C^3 \to A$ of the last free factor in~$A$
gives an inclusion $\C^3 \otimes B \to C,$
and this map is an isomorphism on K-theory.
Since $K_* (\bt)$ is the identity,
it moreover follows that
$K_0 (\af \otimes \bt) \colon {\mathbb{Z}} \big[ \tfrac{1}{3} \big]^3
   \to {\mathbb{Z}} \big[ \tfrac{1}{3} \big]^3$
is given by permuting the coordinates as in Lemma~\ref{KA}:
$(\et_1, \et_2, \et_3) \mapsto (\et_3, \et_1, \et_2).$
Since $\Ad (w)$ is trivial on K-theory,
$K_0 (\gm)$ is given by the same formula.

We now compute the K-theory of the fixed point
algebra $C^{\gm}$ of $C$ under the action~$\gm.$
Since $C$ is simple and unital,
and $\gm$ has the Rokhlin property (Proposition~\ref{rokhlin}),
we can apply Theorem~3.13 in~\cite{Izumi} to conclude
that the inclusion of $C^{\gm}$ in $C$ is injective on K-theory,
and that its range is
\[
\bigcap_{m = 0}^2 \ker (\id - K_* (\gm^m)).
\]
We can ignore the term for $m = 0.$
Since
\[
\id - K_* (\gm^2) = K_* (\gm^2) \circ (- (\id - K_* (\gm)))
\]
and $K_* (\gm^2)$ is injective,
we can also ignore the term for $m = 2.$
Thus, $K_* (C^{\gm}) \cong \ker (\id - K_* (\gm)).$

It immediately follows that $K_1 (C^{\gm}) = 0.$
Moreover, $\id - K_0 (\gm)$ is given by the matrix
\[
\left( \begin{array}{rrr}
    1 & -1 & 0 \\
    0 &  1 & -1 \\
   -1 &  0 & 1
\end{array} \right).
\]
The map $\et \mapsto (\et, \et, \et)$ is an isomorphism from
${\mathbb{Z}} \big[ \tfrac{1}{3} \big]$ to $\ker ( \id - K_0 (\gm) )$
which sends $1$ to $(1, 1, 1) = [1].$
This completes the computation of $K_* (C^{\gm}).$

Let $z \in C \rtimes_{\gamma} \gruppo$ be the standard
unitary corresponding to the usual generator of $\gruppo,$
and let $p \in C \rtimes_{\gamma} \gruppo$ be the \pj\  %
$p = \tfrac{1}{3} (1 + z + z^2).$
The Proposition, Corollary, and proof of the Corollary in~\cite{Rs}
imply that $C^{\gm}$
is isomorphic to the corner $p (C \rtimes_{\gamma} \gruppo) p.$
Since $C \rtimes_{\gamma} \gruppo$ is simple,
this corner is full,
and its inclusion is an isomorphism on K-theory.
Thus,
\[
K_1 (C \rtimes_{\gamma} \gruppo) = 0
\andeqn
K_0 (C \rtimes_{\gamma} \gruppo)
   \cong {\mathbb{Z}} \big[ \tfrac{1}{3} \big],
\]
with $[p]$ corresponding
to~$1 \in {\mathbb{Z}} \big[ \tfrac{1}{3} \big].$
Let
${\widehat{\gm}} \in \Aut (C \rtimes_{\gamma} \gruppo)$
denote the generator of the dual action.
Then $p + {\widehat{\gm}} (p) + {\widehat{\gm}}^2 (p) = 1.$
Moreover, $K_0 ( {\widehat{\gm}} )$
is an automorphism of ${\mathbb{Z}} \big[ \tfrac{1}{3} \big]$
which fixes the nonzero element~$[1],$
so we must have $K_0 ( {\widehat{\gm}} ) = \id.$
Therefore $[1] = 3 [p].$
Since multiplication by~$3$
is an automorphism of ${\mathbb{Z}} \big[ \tfrac{1}{3} \big],$
it follows that there is an isomorphism
$K_0 (C \rtimes_{\gamma} \gruppo)
   \cong {\mathbb{Z}} \big[ \tfrac{1}{3} \big]$
which sends $[1]$ to~$1 \in {\mathbb{Z}} \big[ \tfrac{1}{3} \big].$

The map $\ta_* \colon K_0 (D) \to \R$ is a \hm\  %
which sends $[1]$ to~$1 \in \R.$
There is a unique \hm\  ${\mathbb{Z}} \big[ \tfrac{1}{3} \big] \to \R$
which sends $1$ to~$1,$
namely the inclusion.
Therefore the isomorphism
$K_0 (C \rtimes_{\gamma} \gruppo)
   \cong {\mathbb{Z}} \big[ \tfrac{1}{3} \big]$
must send $\ta_*$ to the inclusion.
Since the order on \pj s over~$D$ is determined by
the \tst s (Proposition~\ref{C:PropOfD}),
it follows that this isomorphism is an order isomorphism
for the usual order on ${\mathbb{Z}} \big[ \tfrac{1}{3} \big].$
\end{proof}

Since $D$ is simple and tensorially absorbs the Jiang-Su algebra,
we can also determine the Cuntz semigroup of the \ca~$D.$
(See Definitions \ref{D:CuEq} and~\ref{D:CuSg}.)

\begin{corollary}\label{Cuntz}
Let $D = C \rtimes_{\gamma} \gruppo$
be as in Definition~\ref{D:D}.
The Cuntz semigroup of~$D$ is given by
$W (D)
 \cong {\mathbb{Z}} \big[ \tfrac{1}{3} \big]_{+} \sqcup (0, \infty).$
Here, ${\mathbb{Z}} \big[ \tfrac{1}{3} \big]_{+}$ is the set of
nonnegative elements in ${\mathbb{Z}} \big[ \tfrac{1}{3} \big].$
Addition and the order on each part of the disjoint union
are the usual ones.
If $\et \in {\mathbb{Z}} \big[ \tfrac{1}{3} \big]_{+}$
and $\et_0 \in (0, \infty)$ is the corresponding
element of $(0, \infty),$
and if $\mu \in (0, \infty),$
then $\et \leq \mu$
if and only if $\et_0 < \mu$ in $(0, \infty),$
while $\et \geq \mu$
if and only if $\et_0 \geq \mu$
in $(0, \infty).$
Moreover, $\et + \mu = \et_0 + \mu,$
the right hand side being computed in $(0, \infty).$
\end{corollary}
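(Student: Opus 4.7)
The plan is to apply the general structure theorem for the Cuntz semigroup of a simple, separable, unital, exact, $\mathcal{Z}$-stable C*-algebra, and then identify the pieces using what has already been established.

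By Proposition~\ref{C:PropOfD} the algebra $D$ is simple, separable, unital, exact, stably finite, and tensorially absorbs the Jiang-Su algebra~$Z.$ Thus $D$ falls into the class treated in~\cite{BPT}, so the structure theorem there gives an order and semigroup isomorphism
\[
W (D) \cong V (D) \sqcup {\operatorname{LAff}}_{+} (T (D)),
\]
where $V (D)$ is the Murray-von Neumann semigroup of $D$ and ${\operatorname{LAff}}_{+} (T (D))$ is the set of strictly positive lower semicontinuous affine functions on the tracial state space. The order and addition within each summand are the obvious ones, and the ``mixed'' relations are: a projection class $\langle p \rangle$ is dominated by a purely positive class $\langle a \rangle$ exactly when $\ta (p) < d_\ta (a)$ for all $\ta \in T (D),$ while $\langle a \rangle \leq \langle p \rangle$ exactly when $d_\ta (a) \leq \ta (p)$ for all $\ta,$ and the sum of a projection class and a purely positive class is purely positive, given by the sum of the associated functions. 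These are the mixed relations stated in the corollary.

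Next I would identify the two summands. First, by Proposition~\ref{P:UniqTrD} the tracial simplex $T (D)$ is a single point, so ${\operatorname{LAff}}_{+} (T (D))$ is just the set of strictly positive real numbers $(0, \infty),$ with ordinary addition and order. Second, since $D$ is stably finite, $V (D) \hookrightarrow W (D)$ via the class map on projections. By Proposition~\ref{C:PropOfD} the order on projections over $D$ is determined by traces, and by Proposition~\ref{P:7.4} the map $\ta_\ast \colon K_0 (D) \to \R$ is precisely the inclusion $\Z \big[ \tfrac{1}{3} \big] \hookrightarrow \R.$ Since $D$ has cancellation (stable rank one, by Proposition~\ref{C:PropOfD}), $V (D)$ coincides with the positive cone of $K_0 (D),$ and under the identification $K_0 (D) \cong \Z \big[ \tfrac{1}{3} \big]$ this positive cone is $\Z \big[ \tfrac{1}{3} \big]_{+}.$

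Finally, I would translate the mixed relations above through the isomorphisms just obtained. For $\et \in \Z \big[ \tfrac{1}{3} \big]_{+}$ represented by a projection $p$ and $\mu \in (0, \infty)$ represented by a purely positive element $a,$ the value $d_\ta (a)$ is $\mu$ and $\ta (p) = {\widetilde{\et}}$ under the inclusion $\Z \big[ \tfrac{1}{3} \big] \hookrightarrow \R,$ so the criteria above become $\et \leq \mu$ iff ${\widetilde{\et}} < \mu$ and $\et \geq \mu$ iff ${\widetilde{\et}} \geq \mu,$ and $\et + \mu = {\widetilde{\et}} + \mu$ computed in $(0, \infty).$ No step is truly hard; the only delicate point is making sure to invoke~\cite{BPT} in the correct form for simple exact $\mathcal{Z}$-stable algebras (where projection classes and purely positive classes are cleanly separated), and to note that the strict inequality on the projection side of a mixed comparison comes from the fact that a purely positive element is never Cuntz-equivalent to a projection in this setting.
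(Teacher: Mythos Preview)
Your proposal is correct and follows essentially the same approach as the paper: invoke the structure result of~\cite{BPT} (specifically Corollary~5.7 there) for simple, unital, exact, stably finite, $Z$-stable algebras, then identify $V(D)$ with ${\mathbb{Z}}\big[\tfrac{1}{3}\big]_{+}$ via Proposition~\ref{P:7.4} and $\operatorname{LAff}_b(T(D))^{++}$ with $(0,\infty)$ via uniqueness of the trace, and finally match the mixed order and addition. One minor point: the properties simple, separable, unital, and exact come from Proposition~\ref{P:UniqTrD} rather than Proposition~\ref{C:PropOfD}; and your explicit mention of cancellation via stable rank one (to identify $V(D)$ with the positive cone of $K_0(D)$) is a detail the paper leaves implicit.
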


\begin{proof}
Since $D$ is simple, unital, exact, stably finite,
and $Z$-stable (Proposition~\ref{P:UniqTrD} and Proposition~\ref{C:PropOfD}),
we can apply Corollary~5.7 of~\cite{BPT}.
Also, the order on \pj s over~$D$ is determined by
the \tst s (Proposition~\ref{C:PropOfD}), so the K-theory computation (Proposition~\ref{P:7.4}) implies that $V (D) \cong {\mathbb{Z}} \big[ \tfrac{1}{3} \big]_{+}$.
Lastly, $\operatorname{LAff}_b (T(D))^{++}$
(as defined in Section~2 of~\cite{BPT})
is isomorphic to $(0, \infty),$
since $T (D)$ consists of just one point.
The proof is completed by observing that the order
and semigroup operation in the
statement match those defined on ${\widetilde{W}} (D)$ in
Section~2 of~\cite{BPT}.
\end{proof}

\section{Open problems}\label{Sec:8}

We discuss here some open questions on
simple separable \ca s not isomorphic to their opposites.
Similar techniques to those of this paper give examples with
some other choices of K-theory.
(Details will appear elsewhere.)
However, it seems that new methods are required to solve
the general case of the following problem.

\begin{question}\label{P:ArbK}
Let $B$ be any UHF~algebra.
Is there a simple separable exact \ca,
not isomorphic to its opposite algebra,
whose K-theory is the same as that of~$B,$
but which otherwise has all the properties of the algebra~$D$
constructed in this paper?
\end{question}

Of course, one can generalize this question,
letting $B$ be a simple unital AF~algebra,
or a simple unital AH~algebra with no dimension growth
and real rank zero.
If one drops the requirement that $D$ have real rank zero,
there are even more choices for~$B.$

New methods are also needed to address the following two questions.

\begin{question}\label{P:PI}
Is there a simple separable purely infinite \ca\  %
which is not isomorphic to its opposite algebra?
\end{question}

\begin{question}\label{P:Nuc}
Is there a simple separable nuclear \ca\  %
which is not isomorphic to its opposite algebra?
\end{question}

We see no obvious obstruction to a positive answer to
Question~\ref{P:PI},
especially since there are type~${\mathrm{III}}$ factors not isomorphic
to their opposite algebras~\cite{Connes2}.
A positive answer to Question~\ref{P:Nuc}
would be much more surprising,
in view of the Elliott program and the fact that all
known invariants of simple nuclear \ca s,
even the Cuntz semigroup,
are unable to distinguish a \ca\  from its opposite.

\section*{Acknowledgments}

The second author would like to thank Prof.\   E.~Kirchberg
for useful discussions,
and for suggestions helpful in proving Lemma~\ref{utrace}.

Some of this work was carried out during the
Fields Institute program on operator algebras of Fall 2007,
and during a visit by the first author to the Fields Institute
for the Spring 2008 Canadian Operator Symposium.
Both authors are grateful to the Fields Institute for
its hospitality.

\end{document}